\documentclass[submission]{eptcs}
\usepackage{amsthm}
\usepackage{amsfonts,amsmath,amssymb}
\usepackage{graphicx} 
\usepackage{tikz}

\usepackage{dsfont}
\usepackage{cancel}
\usepackage[colorinlistoftodos]{todonotes}
\usepackage{dirtytalk}
\usepackage{tabto}
\usepackage{xcolor}
\usepackage{comment}

\usepackage{mathtools}
\usepackage{mathrsfs}
\usepackage{hyperref}
\usepackage{preamb}

\usepackage{iftex}

\ifpdf
  \usepackage{underscore}       
  \usepackage[T1]{fontenc}      
\else
  \usepackage{breakurl}   
\fi

\title{On Local Finiteness of Modal K4 Algebras}
\author{Gabriel Agnew
\institute{
New Mexico State University
\\
Las Cruces, United States}
\email{gvagnew@nmsu.edu}
}

\newcommand{\titlerunning}{On Local Finiteness of Modal K4 Algebras}
\newcommand{\authorrunning}{G. Agnew}

\hypersetup{
  bookmarksnumbered,
  pdftitle    = {\titlerunning},
  pdfauthor   = {\authorrunning},
}
\begin{document}
\maketitle

\begin{abstract}
We study local finiteness for modal $K4$ algebras via the tunability of their dual general frames.
In particular, we provide a sufficient condition for modal $K4$ algebras to be locally finite by identifying a structure which must be present in non-locally finite modal $K4$ algebras.
We then show that this condition becomes both necessary and sufficient for complex modal $K4$ algebras.
Next, we translate this condition into a pair of order-theoretic conditions on transitive Kripke frames, providing a classification of local finiteness on their dual modal algebras. 
We further show that the logic of any class of well-founded transitive relations with no infinite antichains has the finite model property, and conclude that the logic of the class of well-quasi orderings has the finite model property.
\end{abstract}

\section{Introduction}

Local finiteness emerges in many areas of mathematics, but finds a particularly natural and general expression in universal algebra and the study of varieties.
An algebra is said to be locally finite if every finitely generated subalgebra is finite. 
Local finiteness is a desirable property for algebras to have, as it implies the finite model property for its corresponding equational theory.
The logic of locally finite modal algebras are hence guaranteed to have the finite model property and be Kripke complete,
so understanding when a modal algebra is locally finite is a natural problem.
For background on universal algebra and locally finite algebras, see Burris and Sankappanavar \cite{Burris_Sankappanavar_1981}.
\par
Since the twentieth century, local tabularity of normal propositional modal logics has been studied fairly extensively.
Local tabularity is a powerful property for a logic, as it is equivalent to local finiteness of the variety of its algebras.
In particular, this property has received much consideration for normal extensions of the logic $K4$; a classical result by Segerberg \cite{Segerberg_1971} and Maksimova \cite{Maksimova_1975} shows that locally tabular extensions of $K4$ are precisely those of finite depth (i.e., contains a modal formula of finite height).
Local tabularity has also been studied in various other logical contexts, including intuitionistic modal logics \cite{Bezhanishvili_Grigolia_2005} and intermediate logics \cite{Kuznetsov_1971}. 
\par
Thus, while local finiteness is fairly well understood for varieties of modal $K4$ algebras,
there appear to be far fewer characterizations of local finiteness for individual modal $K4$ algebras.
We aim to expand on this topic and provide conditions for local finiteness of such algebras.
\par
It is a well known result that Kripke frames are not descriptive enough to represent every modal algebra.
The gaps between Kripke frames and modal algebras are bridged by general frames, which are pairs $(F,V)$ such that $F=(X,R)$ is a Kripke frame and $V$ forms a Boolean subalgebra of $\mathcal{P}(X)$, the powerset of $X$, closed under the modal operator induced by $R$.
In the semantics of general frames, valuations are required to take propositional variables to elements of $V$, so validity of modal formulas may depend on $V$ rather than solely on the underlying relation.
Consequently, general frames sit in a middle ground between Kripke frames and modal algebras, being less elementary than Kripke frames, but more intuitive than modal algebras.
For more information on general frame semantics, see \cite{Blackburn_Rijke_Venema_2001}.
\par
Every modal algebra can be adequately represented by a descriptive general frame, which is known as the dual frame of the modal algebra.
Modal algebras and their dual frames have the same logic and share many dualized properties.
This duality is due to Jónsson and Tarski, who provided the explicit connection between modal algebras and general frames; in particular, that every modal algebra is embeddable in the complex algebra of a Kripke frame (for details and history, see, e.g., \cite{Blackburn_Rijke_Venema_2001}).
In particular, the property of a general frame being tunable is equivalent to local finiteness of its dual modal algebra. 
This property was first considered in Franzén's proofs of Bull's theorem \cite{Segerberg_1973} and has been discussed in later works, including in connection with local tabularity; see, e.g., \cite{Shehtmen_2016}.
\par
We consider $K4$ modal algebras, which are modal algebras that validate the $4$ axiom, $\Diamond \Diamond p \to \Diamond p$.
We provide a sufficient condition for local finiteness of modal $K4$ algebras. 
While this condition is not, in general, necessary, we show that it becomes both necessary and sufficient for complex modal $K4$ algebras. 
We further translate these results into relational terms to classify the transitive Kripke frames whose associated modal algebras are locally finite. 
Using this classification, we conclude that the logic of any class of well-founded transitive relations with no infinite antichains has the finite model property.
\par
The paper is organized as follows. 
Section $2$ establishes basic definitions and preliminary results.
Section $3$ provides the algebraic criterion for local finiteness in modal $K4$-algebras, then reformulates the results into relational terms for transitive Kripke frames.
It concludes with several results illustrating the scope of these findings, including finite model property results and the resolution to some open questions posed by Shapirovsky \cite{Shapirovsky_2019} concerning tunable frames.

\section{Preliminary Definitions}

We fix some notation for binary relations. Let $X$ be a set, $R\subseteq X \times X$ a relation.
Let $\Delta_X=\{(x,x) \mid x \in X\}$ denote the diagonal of $X$. The relation $R \cup \Delta_X$ is the reflexive closure of $R$.
For $Y\subseteq X$, the restriction of $R$ to $Y$ is $R \restriction Y = \{(x,y) \in R \mid x,y \in Y\}$,
and for $x \in X$, let $R(x)=\{y \in X \mid xRy\}$.
\par
We say that a relation $R$ on $X$ is \textit{well-founded} if it does not contain an infinite descending chain.
For a well-founded poset $(X,\leq)$, we define the \textit{rank} function $\rho$ from $X$ into the ordinals by setting
\[
\rho(x)=\text{sup}\{\rho(y)+1\mid y < x\}
\]
where $<$ is the strict part of $\leq$. 
Existence and uniqueness of $\rho$ follows from \cite{Jech_2002}, applied to the well-founded, irreflexive relation $<$.
\par
Throughout, a modal algebra $(V,\Diamond)$ denotes a Boolean Algebra $V$ together with a normal modal operator $\Diamond$; the Boolean operations and $\Box$ are implicitly understood.
A general frame is a pair $(F,V)$ where $F=(X,R)$ denotes a Kripke frame, and $V$ is a collection of subsets of $X$ such that $(V,\Diamond_R)$ is a modal algebra with $\Diamond_R A=R^{-1}[A]$.
\par
Up to isomorphism, we can take $(V,\Diamond_R)$ to be the dual modal algebra of the general frame $(X,R,V)$.
Under this correspondence, a general frame and its dual modal algebra validate exactly the same modal formulas. 
If the underlying relation $R$ is clear, we instead write $(V,\Diamond)$ as the dual algebra.
\par
For a set $W$, let $\mathcal{P}(W)$ represent the power set of $W$. A \textit{partition} $P$ of $W$ is a set of nonempty, pairwise disjoint subsets of $W$ such that $\bigcup P =W$. 
    We say that a partition $Q$ is a \textit{refinement} of $P$ if each element of $P$ is the union of a collection of elements of $Q$. 
    \par
    Let $F=(X,R)$ be a Kripke frame. The \textit{complex algebra of $F$}, denoted $A(F)$, is the corresponding modal algebra $(\mathcal{P}(X),\Diamond_R)$, 
    where $\Diamond_R U$ denotes $R^{-1}[U]$. 
    \par
    \smallskip
    \begin{definition}
        Let $X$ be a set, $R$ a relation on $X$. A partition $P$ of $X$ is said to be \textit{$R$-tuned} if for all $A,B \in P$,
        \[
        \exists a \in A \ \exists b \in B (a R b) \implies \forall a \in A\ \exists b \in B \ (aRb).
        \]
    \end{definition}
    If the underlying relation is understood, we simply say that $P$ is tuned.
    Accordingly, a general frame $(X,R,V)$ is tunable if every finite partition $P\subseteq V$ of $X$ has a finite tuned refinement $Q \subseteq V$.
    
    Recall that an algebra $\mathbb{A}$ is \textit{locally finite} if every finitely generated subalgebra of $\mathbb{A}$ is finite.
    Typically, tunable frames are studied in the context of Kripke semantics,
    motivated by the fact that a tuned partition of a frame induces a subalgebra of its corresponding algebra; see \cite[Corollary 3.3]{Blok_1980}.
    In fact, a Kripke frame is tunable iff its algebra is locally finite.
    The corresponding statement for general frames follows similarly; see, e.g., \cite[Corollary 4.13]{Shapirovsky_2025} for details. 
    We record it here for convenience.

    \begin{proposition}
        The algebra of a general frame $F$ is locally finite iff $F$ is tunable.
    \end{proposition}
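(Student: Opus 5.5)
We prove both directions directly, working in the dual algebra $(V,\Diamond)$ of the general frame $F=(X,R,V)$. The bridge is the correspondence between finite Boolean subalgebras of $V$ and finite partitions $P\subseteq V$ of $X$: a finite Boolean subalgebra is generated by its atoms, and these atoms form a finite partition lying in $V$. Conversely, any finite partition $P\subseteq V$ generates the finite Boolean subalgebra $B_P$ of all unions of blocks of $P$.

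First we show that tunability implies local finiteness. Let $S\subseteq V$ be finite. The Boolean subalgebra generated by $S$ is finite, and its atoms give a finite partition $P\subseteq V$. By tunability, choose a finite tuned refinement $Q\subseteq V$. The key check is that $B_Q$ is closed under $\Diamond$. Since $\Diamond$ preserves finite joins, it suffices that $\Diamond B$ is a union of blocks of $Q$ for each block $B\in Q$. Fix $A\in Q$ with $A\cap \Diamond B\neq\emptyset$. Then some $a\in A$ and $b\in B$ satisfy $aRb$. Tunedness then gives that every $a\in A$ has an $R$-successor in $B$, so $A\subseteq \Diamond B$. Hence $B_Q$ is a finite modal subalgebra containing $S$, so the subalgebra generated by $S$ is finite.

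Next we show that local finiteness implies tunability. Let $P\subseteq V$ be a finite partition and let $M$ be the modal subalgebra generated by $P$. By local finiteness $M$ is finite, so its atoms form a finite partition $Q\subseteq V$. Each block of $P$ lies in $M$ and is therefore a union of atoms, so $Q$ refines $P$. To see that $Q$ is tuned, take $A,B\in Q$ with some $a\in A$, $b\in B$ and $aRb$. Then $a\in \Diamond B\in M$. Since $A$ is an atom of $M$, either $A\subseteq\Diamond B$ or $A\cap\Diamond B=\emptyset$, and the second is impossible because $a$ lies in both. So $A\subseteq\Diamond B$, which says exactly that every element of $A$ has a successor in $B$.

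Nothing here is a real obstacle. The only point needing care is the identification of finite Boolean subalgebras with finite partitions by their atoms, including that atoms of a finite subalgebra are nonempty and cover $X$. This holds because they are the minimal nonzero elements and their join is $X$. Transitivity of $R$ is not used, so the statement holds for arbitrary general frames.
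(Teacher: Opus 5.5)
Your proof is correct, and it is the standard argument: atoms of a finite Boolean subalgebra of $V$ form a finite partition in $V$, a tuned finite partition $Q \subseteq V$ makes each $\Diamond B$ ($B \in Q$) a union of blocks so that the unions of blocks of $Q$ form a finite modal subalgebra, and the atoms of a finite modal subalgebra are automatically tuned. The paper gives no proof of its own here; it records the proposition and defers to Blok (Corollary 3.3) and Shapirovsky (Corollary 4.13), whose reasoning is exactly this. You are also right that transitivity plays no role.
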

    Thus, to prove that a modal algebra is locally finite, it suffices to show that some (equivalently, any) dual general frame is tunable.

\section{Theorems}

Let $\mathcal{A},\mathcal{B}$ be nonempty families of subsets of $X$ which partition $\bigcup\mathcal{A}$ and $\bigcup\mathcal{B}$, respectively. 
Define the \textit{splitting of $\mathcal{A}$ among $\mathcal{B}$} to be the collection of sets
\[
\{A \cap B \mid A \in \mathcal{A},\ B \in \mathcal{B}\}\cup
\{A-\bigcup \mathcal{B} \mid A \in \mathcal{A}\}
\]
minus any instance of the empty set. Notice that if $\mathcal{A}$ and $\mathcal{B}$ both partition $X$, then this will be the coarsest common refinement of both $\mathcal{A}$ and $\mathcal{B}$ (often denoted $\mathcal{A} \wedge \mathcal{B}$).
The following proposition is not difficult to verify.

\begin{proposition} \label{prop:splitting}
    Let $(X,R,V)$ be a general frame, and $\mathcal{A},\mathcal{B} \subseteq V$ be two finite nonempty families of subsets of $X$ which partition $\bigcup \mathcal{A}$ and $\bigcup \mathcal{B}$, respectively.
    Then the splitting of $\mathcal{A}$ among $\mathcal{B}$ is a finite partition of $\bigcup \mathcal{A}$ which refines $\mathcal{A}$ and is contained in $V$.
\end{proposition}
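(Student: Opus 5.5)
The plan is to verify directly the four claims in Proposition~\ref{prop:splitting}: that the splitting is finite, that it lies in $V$, that it partitions $\bigcup\mathcal{A}$, and that it refines $\mathcal{A}$. Write $\mathcal{S}$ for the splitting of $\mathcal{A}$ among $\mathcal{B}$. Finiteness is immediate. $\mathcal{S}$ is the image of the finite set $\mathcal{A}\times\mathcal{B}$ under $(A,B)\mapsto A\cap B$, together with the image of $\mathcal{A}$ under $A\mapsto A-\bigcup\mathcal{B}$, with the empty set removed. So $|\mathcal{S}|\le |\mathcal{A}|(|\mathcal{B}|+1)$.

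For containment in $V$, I will use that $V$ is a Boolean subalgebra of $\mathcal{P}(X)$. Since $\mathcal{B}$ is a finite subset of $V$, the finite union $\bigcup\mathcal{B}$ lies in $V$. Hence every $A\cap B$ lies in $V$, and so does every $A-\bigcup\mathcal{B}=A\cap(X-\bigcup\mathcal{B})$. This is the only point where finiteness of $\mathcal{B}$ is really needed; an infinite union need not lie in $V$.

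For the partition property, every member of $\mathcal{S}$ is nonempty by construction and is contained in $\bigcup\mathcal{A}$. To see that $\mathcal{S}$ covers $\bigcup\mathcal{A}$, let $x\in A\in\mathcal{A}$. Either $x\in B$ for some $B\in\mathcal{B}$, in which case $x\in A\cap B$, or else $x\in A-\bigcup\mathcal{B}$. For pairwise disjointness, consider two distinct members of $\mathcal{S}$; they come from indices $(A,B)$, $(A',B')$ or from $A-\bigcup\mathcal{B}$, $A'-\bigcup\mathcal{B}$. Distinct sets must come from distinct indices. If $A\ne A'$, the two sets are disjoint because $A\cap A'=\emptyset$. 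If $A=A'$, then they are either $A\cap B$ and $A\cap B'$ with $B\ne B'$, which are disjoint because $\mathcal{B}$ is pairwise disjoint, or one is $A\cap B$ and the other is $A-\bigcup\mathcal{B}$, which are disjoint trivially.

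For refinement, each $A\in\mathcal{A}$ equals $\bigcup_{B\in\mathcal{B}}(A\cap B)\cup(A-\bigcup\mathcal{B})$, and dropping empty pieces does not change this union. So $A$ is a union of members of $\mathcal{S}$.

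There is no real obstacle here. The only care needed is in the disjointness step: distinct index pairs can yield the same set, and that is harmless because $\mathcal{S}$ is a set, not an indexed family.
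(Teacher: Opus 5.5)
Your verification is correct and is exactly the routine check the paper omits; it only remarks that the proposition ``is not difficult to verify.'' Finiteness, membership in $V$ (via closure of the Boolean subalgebra under finite unions and complements), covering, pairwise disjointness and refinement all go through as you describe, with one small quibble: finiteness of $\mathcal{B}$ is not used only for $\bigcup\mathcal{B}\in V$, since your bound $|\mathcal{A}|(|\mathcal{B}|+1)$ on the size of the splitting also depends on it.
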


Thus, to \textit{split} $\mathcal{A}$ among $\mathcal{B}$ will mean to refine $\mathcal{A}$ into its splitting among $\mathcal{B}$.
If $\mathcal{B}=\{B_i\}_{i \in I}$ and $\mathcal{A}'$ denotes the splitting of $\mathcal{A}$ among $\mathcal{B}$, then we write $\mathcal{A} \restriction B_i$ for the collection of elements of $\mathcal{A}'$ which have nonempty intersection with $B_i$.
This technique will be quite useful in the following theorems. 
\par
Let $(X,R,V)$ be a general frame, and $\mathcal{A},\mathcal{C}\subseteq V$ be finite families of subsets of $X$ which partition $\bigcup\mathcal{A}$ and $\bigcup \mathcal{C}$ respectively.
We say $\mathcal{A}$ is \textit{tuned with respect to} $\mathcal{C}$ if for all refinements $\mathcal{B}$ of $\mathcal{A}$ and all $B \in \mathcal{B}$, $C \in \mathcal{C}$, we have
    \[
    \exists b \in B\ \exists c \in C \ (bRc) \implies \forall b \in B \ \exists c \in C \ (bRc).
    \]

\begin{proposition}\label{prop:exists_tuned_wrt}
    Let $(X,R,V)$ be a general frame 
    with $\mathcal{A},\mathcal{C} \subseteq V$ finite nonempty families of subsets of $X$ which partition $\bigcup\mathcal{A}$ and $\bigcup \mathcal{C}$ respectively.
    Then there is a finite refinement $\mathcal{A}'\subseteq V$ of $\mathcal{A}$ which is tuned with respect to $\mathcal{C}$.

\begin{proof}
    If $\bigcup\mathcal{A} \cap \Diamond\bigcup \mathcal{C}=\emptyset$, we are done. So, suppose not.
    Then for each $x \in \bigcup \mathcal{A}$, there is a collection $\mathcal{W}\subseteq\mathcal{C}$ such that $x \in \Diamond W$ for each $W\in \mathcal{W}$ and $x\notin \Diamond U$ for any $U \in \mathcal{C-W}$.
    Ignoring instances of $\emptyset$, split $\mathcal{A}$ over the (nonempty) family of disjoint subsets
    \[
    \left\{\bigcap_{W \in \mathcal{W}} \Diamond W \cap 
    \bigcap_{U\in (\mathcal{C-W})}\Box (X-U) 
    \mid \emptyset \neq \mathcal{W} \subseteq \mathcal{C}\right\} 
    \]
    and call the result of this splitting $\mathcal{A}'$. Since the above family is a finite, nonempty collection of members of $V$, from proposition \ref{prop:splitting} it follows that $\mathcal{A}' \subseteq V$ and is finite. 
    Now, let $\mathcal{B}$ be any refinement of $\mathcal{A}'$, and let $B \in \mathcal{B}$, $C \in \mathcal{C}$.
    Suppose $bRc$ for some $b \in B$, $c \in C$. Then 
    $b \in \Diamond C$, so there is a nonempty $\mathcal{W} \subseteq \mathcal{C}$ such that $C \in \mathcal{W}$ and
    \[
    b \in \bigcap_{W \in \mathcal{W}} \Diamond W \cap \bigcap_{U\in (\mathcal{C-W})}\Box (X-U).
    \]
    But any $d \in B$ will also be a member of this set. Hence $d \in \Diamond C$ as well, and it follows that $\mathcal{B}$ is tuned with respect to $\mathcal{C}$.
    
\end{proof}
    
\end{proposition}

\begin{definition} \label{def:diamond_tower}
    We say that a modal algebra $(V,\Diamond)$ contains an infinite descending $\Diamond$-tower if there exist countable nonempty, pairwise disjoint elements  $T_1,T_2,T_3,\dots \in V$ satisfying the following properties for all $i \in \mathbb{Z}_{>0}$:
    \begin{itemize}
        \item [(i)] $T_{i+1} \subseteq \Diamond T_i$
        \item [(ii)] $T_i \cap \Diamond T_{i+1}=\emptyset$
    \end{itemize}
\end{definition}

$\{T_1,T_2,\dots\}$ is accordingly referred to as an infinite descending $\Diamond$-tower.
The 'tower' aspect of this construction arises from its translation into a relational setting. Informally, $T_1$ forms the 'top' of the tower, with $T_2$ lying entirely underneath $T_1$, $T_3$ lying entirely underneath $T_2$, and so on. 
It is worth noting that in modal $K4$ algebras, infinite descending $\Diamond$-towers will thus have $T_j\subseteq\Diamond T_i$ for all $j>i$.
\par
The following theorem provides a sufficient condition for local finiteness of $K4$ modal algebras.

\begin{theorem} \label{thm:no_tower_implies_locally_finite}
    Let $(V,\Diamond)$ be a $K4$ modal algebra.
    Then if $(V,\Diamond)$ does not contain an infinite descending $\Diamond$-tower, it is locally finite.
\end{theorem}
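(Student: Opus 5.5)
We argue by contraposition, passing through the dual general frame. By the Proposition relating local finiteness and tunability, it suffices to show the following. If some dual general frame $(X,R,V)$ of $(V,\Diamond)$, with $R$ transitive, is not tunable, then $V$ contains an infinite descending $\Diamond$-tower.

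\textbf{Step 1: an infinite sequence of strict refinements.} Fix a finite partition $P\subseteq V$ of $X$ with no finite tuned refinement in $V$. Put $P_0=P$. Given $P_n$, use Proposition~\ref{prop:exists_tuned_wrt} (with $\mathcal{A}=\mathcal{C}=P_n$) to obtain a finite refinement $P_{n+1}\subseteq V$ of $P_n$ that is tuned with respect to $P_n$. We may take $P_{n+1}$ to be the splitting of $P_n$ exactly as in that proof, so a cell is cut only when forced.

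If $P_{n+1}=P_n$ for some $n$, then $P_n$ is tuned with respect to itself, hence tuned, which is a contradiction. So every stage properly refines the previous one.

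\textbf{Step 2: witnessed chains and K\"onig's lemma.} By construction, a cell $A\in P_n$ is cut at stage $n+1$ only because some $C\in P_n$ has $A\cap\Diamond C\neq\emptyset\neq A-\Diamond C$. Moreover $C$ cannot be a cell already present in $P_{n-1}$. Indeed, $A$ lies inside a cell of $P_n$, which is tuned with respect to $P_{n-1}$, and tunedness with respect to $P_{n-1}$ is inherited by all refinements. Hence $C$ was itself cut at stage $n$, by some $D\in P_{n-1}$, and so on downward.

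This gives, for every $n$, a chain of witnesses of length $n$ reaching back to $P_0$. Arrange these chains into a finitely branching tree: level $k$ consists of cells of $P_k$, and each node has as children the cells of the next partition that it witnesses a cut of. The tree is infinite and finitely branching, so K\"onig's lemma yields an infinite branch $X_1,X_2,X_3,\dots$ with $X_k\in P_k$ and
\[
X_{k+1}\cap\Diamond X_k\neq\emptyset\neq X_{k+1}-\Diamond X_k .
\]
Along this branch the points of $X_{k+1}$ that see $X_k$ play the role of ``lower'' levels. This matches the orientation $T_{i+1}\subseteq\Diamond T_i$ of Definition~\ref{def:diamond_tower}.

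\textbf{Step 3: extracting the tower (main obstacle).} It remains to turn the branch into pairwise disjoint nonempty $T_i\in V$ satisfying (i) and (ii). I would first try
\[
T_k=(X_k\cap\Diamond X_{k-1})-\Diamond X_{k+1},
\]
or a similar Boolean combination, after passing to a subsequence so that the chosen cells are pairwise disjoint. Disjointness may need care, since cells from different stages can be nested rather than disjoint.

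Transitivity is the key tool here. In $K4$, seeing a point that sees $X_{k-1}$ implies seeing $X_{k-1}$. This should give (i), since the points of $X_{k+1}$ that see $X_k$ see into the part of $X_k$ that sees $X_{k-1}$. Subtracting $\Diamond X_{k+1}$ is meant to give (ii).

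The hard part is nonemptiness of each $T_k$ together with (i) holding for the trimmed sets. If the naive choice fails, I would refine the K\"onig argument so that each node records the pair of pieces (sees / does not see) of its cut. I would then apply a Ramsey-type pigeonhole over the finitely many Boolean types at each stage to select a subsequence along which these pieces line up as a $\Diamond$-tower.
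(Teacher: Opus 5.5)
\textbf{There is a genuine gap.} Step 3, which you flag as the main obstacle, carries all of the theorem's content, and Steps 1--2 do not supply what it needs. A K\"onig branch records only \emph{existential} facts: some points of one cell see another cell, and some do not. A $\Diamond$-tower instead needs the \emph{universal} layering conditions: every point of $T_{i+1}$ sees $T_i$, and no point of $T_i$ sees $T_{i+1}$.

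For your candidate $T_k=(X_k\cap\Diamond X_{k-1})-\Diamond X_{k+1}$, condition (ii) is automatic. Condition (i), however, is unsupported: nothing makes a point of $T_{k+1}$ see a point of $X_k$ that both sees $X_{k-1}$ and avoids $\Diamond X_{k+1}$. Transitivity only transfers seeing from a point to its $R$-predecessors (if $xRy$ and $y\in\Diamond X_{k-1}$ then $x\in\Diamond X_{k-1}$). It says nothing about which points of $X_k$ a given $x$ sees, which is the direction you invoke. A Ramsey-type selection over the same existential data cannot produce (i), (ii), nonemptiness and disjointness together.

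Step 2 is also misstated. Since $P_{k+1}$ is tuned with respect to $P_k$, every $X_{k+1}\in P_{k+1}$ lies entirely inside or entirely outside $\Diamond X_k$ for any $X_k\in P_k$. So the displayed property $X_{k+1}\cap\Diamond X_k\neq\emptyset\neq X_{k+1}-\Diamond X_k$ can never hold. The cell being cut and the cell witnessing the cut both belong to $P_k$; the next link of the chain is only a piece of the cut cell.

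\textbf{The missing idea} is to build the tower as a global stratification peeled off from the top, rather than extracting it from a branch. This is how the paper proceeds.
\begin{itemize}
\item By transitivity, the type $\{A\in P\mid x\in\Diamond A\}$ of a point can only shrink along $R$.
\item Let $\phi(\mathcal A)$ be the set of points of type exactly $\mathcal A$, and set $T=\Box\emptyset\cup\bigcup_{\emptyset\neq\mathcal A\subseteq P}(\phi(\mathcal A)\cap\Box\phi(\mathcal A))$. These are the points whose type is constant on all their successors.
\item $T\cap\Diamond(X-T)=\emptyset$, because $\Box\phi(\mathcal A)\subseteq\Box\Box\phi(\mathcal A)$ in $K4$.
\item $X-T\subseteq\Diamond T$: from a point outside $T$ you can step to a successor of strictly smaller type. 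Since $P$ is finite, you reach $T$ after finitely many steps, and transitivity gives $x\in\Diamond T$.
\item Splitting $P$ by types gives a finite tuned partition of $T$ inside $V$.
\item You then refine $P$ on $X-T$ to be tuned with respect to that partition (Proposition~\ref{prop:exists_tuned_wrt}), and repeat relative to $X-T$.
\end{itemize}
If the process stops, you have assembled a finite tuned refinement. If it never stops, the layers $T_1,T_2,\dots$ satisfy (i) and (ii) directly from the two displayed properties. Your contrapositive framing is harmless, but without this layer-peeling device, or an equivalent one, Step 3 has nothing to work with, and no K\"onig argument is needed once you have it.
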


\begin{proof}
    Let $(X,R,V)$ be any general frame whose dual modal algebra is $(V,\Diamond)$, and assume $(V,\Diamond)$ has no infinite descending $\Diamond$-towers. 
    Let $P_1 \subseteq V$ be a finite partition of $X$.
    We construct a refinement process of $P_1$ with the following properties: if this process does not terminate after finitely many steps, then it produces an infinite descending $\Diamond$-tower; if this process terminates after finitely many steps, then it produces a finite tuned refinement of $P_1$ contained in $V$, hence implying that the subalgebra generated by $P_1$ is finite.
    \par \textbf{Step 1}:
    For $\emptyset\neq\mathcal{A}\subseteq P_1$, define
    \[
    \phi_1 (\mathcal{A})= 
    \bigcap_{A \in \mathcal{A}} \Diamond A \ \cap
    \bigcap_{B \in (P_1-\mathcal{A})}\Box (X-B)
    \]
    and let
    \[
    T_1=\Box\emptyset \cup\bigcup_{\emptyset \neq\mathcal{A}\subseteq P_1}(\phi_1(\mathcal{A}) \cap \Box\phi_1(\mathcal{A})).
    \]
    \textbf{Claim.} The following hold:
    \begin{itemize}
    \item [(i)] $T_1 \cap \Diamond(X-T_1)=\emptyset$
    \item [(ii)] $X-T_1 \subseteq \Diamond T_1$.
    \end{itemize}

    \begin{proof}
    \begin{itemize}
    
  \item[(i)] If $x\in T_1$, then either $x \in \Box \emptyset$ or $x \in \phi_1(\mathcal{A}) \cap \Box\phi_1(\mathcal{A})$ for some $\emptyset\neq\mathcal{A}\subseteq P_1$, in which case
  $x \in \Box\Box\phi_1(\mathcal{A})$, and subsequently $x \in \Box T_1$.
    In either case, $x \notin \Diamond(X-T_1)$.
    
  \item[(ii)] Let $x_1 \in X-T_1$. Since $\Box \emptyset \subseteq T_1$, we can assume $R(x_1)\neq \emptyset$.
    Hence, the set $\mathcal{A}_1 = \{A \in P_1 \mid x_1 \in \Diamond A \}$ is nonempty, and $x_1 \in \phi_1(\mathcal{A}_1)$.
    But $x_1 \notin \Box \phi_1(\mathcal{A}_1)$, so there is some $x_2 \in R(x_1)$ such that $x_2 \notin \phi_1(\mathcal{A}_1)$.
    Again, set $\mathcal{A}_2=\{A \in P_1\mid x_2 \in \Diamond A\}$.
    Since $x_2 \in \Diamond A$ implies $x_1 \in \Diamond A$, it follows that
    $\mathcal{A}_2\subsetneq \mathcal{A}_1$ .
    If $x_2 \notin T_1$, then we likewise find some $x_3 \in R(x_2)$ and $\mathcal{A}_3\subsetneq \mathcal{A}_2$ with $x_3 \in \phi_1(\mathcal{A}_3)$. 
    Continue in this manner to get some $x_n \in T_1$ with 
    $x_1Rx_2R\dots Rx_n$, which exists by the well-foundedness of $\mathcal{P}(P_1)$. Thus $x_1 \in \Diamond T_1$, as desired.
    \end{itemize}

    \end{proof}

    Now, if $X-T_1$ is nonempty, we split $P_1$ among $T_1$ and $X-T_1$. Further split $P_1 \restriction T_1$ among 
    \[
    \{\Box\emptyset\}\cup\{\phi_1(\mathcal{A})\mid \emptyset\neq\mathcal{A} \subseteq P_1\}
    \]
    which is a finite partition of $T_1$ contained in $V$. Let $F_1$ denote the result of this splitting.
    \par
    The partition $F_1$ is tuned over $(T_1, R\restriction T_1)$.
        To see why, let $A,B \in F_1$ such that there is an $a \in A$ and $b \in B$ with $aRb$. Then since $a,b \in T_1$, we see that $a,b \in \phi_1(\mathcal{A})$ for some $\mathcal{A} \subseteq P_1$ with $A,B \in \mathcal{A}$.
        Let $c \in A$ and note that $c \in \phi_1(\mathcal{A})$, which follows from the above splitting.
        Hence, $cRd$ for some $d \in B \in \mathcal{A}$ by definition of $\phi_1(\mathcal{A})$, as desired.

    Thus, if $X=T_1$, then we are done and the process ends. If not, let $X_2=X-T_1$. 
    By lemma \ref{prop:exists_tuned_wrt}, we obtain a finite refinement $P_2\subseteq V$ of $P_1\restriction X_2$ which is tuned with respect to $F_1$.
    This concludes step 1.
    \par
    \textbf{Step n+1}: Given that we have obtained $X_{n+1}$ and $P_{n+1}$ from step $n$, we repeat this process with respect to these sets.
    This step will be similar to step 1 in design, but will contain a number of finer differences that warrants an explicit description.
    For $\emptyset\neq\mathcal{A}\subseteq P_{n+1}$, define
    \[
    \phi_{n+1} (\mathcal{A})= 
    \bigcup_1^n T_i\cup\left[ 
    \bigcap_{A \in \mathcal{A}} \Diamond A \ \cap
    \bigcap_{B \in (P_{n+1}-\mathcal{A})}\Box (X-B)
    \right]
    \]
    and let
    \[
    T_{n+1}=X_{n+1}\cap \left[ \Box\bigcup_1^n T_i \cup \bigcup_{\emptyset \neq\mathcal{A}\subseteq P_1}
    \phi_{n+1}(\mathcal{A}) \cap \Box\phi_{n+1}(\mathcal{A}) \right].
    \]
    \textbf{Claim.} 
    \begin{itemize}
    \item[(i)] $T_{n+1} \cap \Diamond(X_{n+1}-T_{n+1})=\emptyset$
    \item[(ii)] $X_{n+1}-T_{n+1} \subseteq \Diamond T_{n+1}$.
    \end{itemize}
    
    \begin{proof}
    \begin{itemize}
    \item[(i)] Let $x \in T_{n+1}$.
        If $x\in X_{n+1}\cap\Box\bigcup_1^n T_i$, then it witnesses nothing in $X_{n+1}$.
        If $x\in X_{n+1} \cap \phi_{n+1}(\mathcal{A}) \cap \Box\phi_{n+1}(\mathcal{A})$, then $x \in \Box\Box\phi_{n+1}(\mathcal{A})$, and it follows that $x \in \Box T_{n+1}$. In any case, $x \notin \Diamond(X_{n+1} - T_{n+1})$.
    \item[(ii)] Let $x_1 \in X_{n+1}-T_{n+1}$. 
    Since $(X_{n+1}\cap\Box\bigcup_1^nT_i) \subseteq T_{n+1}$, we can assume $R(x_1) \cap T_{n+1}$ is nonempty. 
    We omit the rest of the proof, as it is very similar to that of (ii) in the claim in step 1.
    \end{itemize}
    \end{proof}
    \par
    Now, in the case where $T_{n+1}=X_{n+1}$, we can split $P_{n+1}$ among 
    \[
    \mathcal{F}=\{\Box\bigcup_1^nT_i\}\cup \{\phi_{n+1}(\mathcal{A}) \mid \emptyset \neq \mathcal{A} \subseteq P_{n+1}\}
    \]
    which is a finite family of pairwise disjoint members of $V$. Call the result of this splitting $F_{n+1}$. 
    By the same reasoning used to prove that $F_1$ was tuned, $F_{n+1}$ is a tuned partition of $(T_{n+1},R\restriction T_{n+1})$ contained in $V$.
    \par
    Hence, each $F_j$ is a finite tuned partition of $T_j$, and as previously obtained, $F_j$ is tuned with respect to $F_i$ for each $i<j$.
    Moreover, for $i<j$, we see that $A \in F_j$ and $B \in F_i$ implies $\Diamond A \cap B = \emptyset$, so $F_i$ is tuned with respect to $F_j$ as well.
    Consequently, $\bigcup_1^{n+1}F_i$ is a finite tuned refinement of $P_1$ contained in $V$, so the subalgebra generated by $P_1$ is finite. In this case, the process terminates.
    \par
    On the other hand, if $T_{n+1}\neq X_{n+1}$, then let $X_{n+2}=X_{n+1}-T_{n+1}$, and split $P_{n+1}$ among $\{T_{n+1},X_{n+2}\}$.
    Similarly, split $P_{n+1} \restriction T_{n+1}$ among $\mathcal{F}$ to get $F_{n+1}$, a finite tuned refinement of $P_{n+1} \restriction T_{n+1}$ which is contained in $V$.
    Then by lemma \ref{prop:exists_tuned_wrt}, we obtain a finite refinement $P_{n+2}\subseteq V$ of $P_{n+1}\restriction X_{n+2}$ which is tuned with respect to $F_{n+1}$. 
    This concludes step n+1.
    \par
    If this process does not terminate, then the claims proven in each step show that $\{T_1,T_2,\dots\}\subseteq V$ is an infinite descending $\Diamond$-tower, contrary to our assumption. Thus, this process terminates after finitely many steps, implying that it has a finite tuned refinement contained in $V$.
    Since $P_1$ was an arbitrary finite partition of $X$ contained in $V$, it follows that $(X,R,V)$ is tunable and $(V,\Diamond)$ is locally finite.    
\end{proof}

\begin{remark}
    The converse is not true in general. 
    Consider the general frame $(\omega,\leq^{-1},V)$, where $\leq^{-1}$ is the reverse ordering on the natural numbers and $V$ consists of all finite and cofinite subsets of $(\omega,\leq^{-1})$ (with cofinite subsets being the complements of finite subsets).
    It is not too difficult to check that $(V,\Diamond)$ is a modal algebra.
    \par
    Notice that the family of singletons form an infinite descending $\Diamond$-tower in $V$. Nevertheless, $V$ is locally finite:
    indeed, $(\omega,\leq^{-1},V)$ is tunable, as
    any finite partition of $\omega$ contained in $V$ will consist of a downset together with a finite partition of its complement, which can be refined into a finite collection of singletons. It is straightforward to verify that this finite refinement is tuned.
\end{remark}
The converse does hold, however, for the complex modal algebras of Kripke frames.

\begin{proposition} \label{prop:complete_implies_tower_implies_not_locally_finite}
    The complex modal algebra of a Kripke frame with an infinite descending $\Diamond$-tower is not locally finite. 
\end{proposition}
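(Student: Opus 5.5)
The plan is to exhibit a single finite partition of $X$ that has no finite tuned refinement. By the Proposition relating tunability and local finiteness, this shows that $A(F)$ is not locally finite. The important point is that in the complex algebra \emph{every} subset of $X$ is available, so unions of tower members may be used as generators. Let $(X,R)$ be the frame and $\{T_1,T_2,\dots\}$ the tower. I will assume throughout that $R$ is transitive, as in the surrounding $K4$ setting. Put
\[
O=\bigcup_{k\geq 0}T_{2k+1},\qquad E=\bigcup_{k\geq 1}T_{2k},\qquad Y=X-(O\cup E),
\]
and let $P=\{O,E,Y\}$, discarding $Y$ if it is empty.

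Suppose, towards a contradiction, that $Q$ is a finite tuned refinement of $P$. First I will record the standard observation that a tuned partition yields a finite quotient frame $(Q,R_Q)$, where $B\,R_Q\,C$ iff $bRc$ for some $b\in B$ and $c\in C$. Tunedness says exactly that the map $f:x\mapsto$ (the block of $x$) is a p-morphism. Hence every modal term built from $O,E,Y$ is a union of blocks of $Q$, so the subalgebra generated by $P$ has at most $2^{|Q|}$ elements. It therefore suffices to produce infinitely many pairwise distinct terms in $O,E$. The natural candidates are defined by recursion:
\[
D_1=O\cap\Box(X-E),\qquad D_{n+1}=(\text{the parity class of } T_{n+1})\cap\Diamond D_n\cap\Box\bigl(X-(\text{the opposite parity class})\bigr)\ \text{or a variant thereof.}
\]
I will show by induction that $T_n\cap D_n\neq\emptyset$, while $D_n$ is disjoint from $\bigcup_{m>n}T_m$ (or at least from $T_{n+1}$). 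This uses conditions (i) and (ii) of Definition \ref{def:diamond_tower}: condition (i) is used to propagate membership downward, via $T_{n+1}\subseteq\Diamond T_n$, and condition (ii) is used to block it upward, via $T_n\cap\Diamond T_{n+1}=\emptyset$. Together these facts make the $D_n$ pairwise distinct, which yields the contradiction.

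The main obstacle is that the tower axioms only constrain \emph{adjacent} levels. A point of $T_1$ cannot see $T_2$, but it may well see $T_4$, which also lies in $E$, so the naive term $O\cap\Box(X-E)$ might miss $T_1$ entirely. To handle this, I would first pass to a better-behaved tower inside $\mathcal{P}(X)$, which costs nothing since all subsets are available. The idea is to fix a chain $x_1,x_2,\dots$ with $x_n\in T_n$ and $x_{n+1}Rx_n$, which exists by (i). Then I would replace $T_n$ by a subset that keeps $x_n$ and is chosen so that its points avoid the higher levels used by the generators. For example, one could work with the singletons or the chain points themselves, and use a generator coloring of period $3$ instead of parity, so that "seeing a point of the previous colour but not of the next one" pins down a single level.

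If this cleanup proves awkward, the fallback is a pigeonhole argument directly on the quotient $(Q,R_Q)$. Infinitely many $x_n$ lie in one block. Transitivity gives $B_{n+1}R_QB_n$ along the chain, and condition (ii) applied at the witnesses should force a strictly decreasing sequence of $R_Q$-successor sets. Such a sequence cannot be infinite in a finite quotient, and this would give the desired contradiction.
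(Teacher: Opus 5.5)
Your overall strategy is the right one, and it is what the paper does: use the two parity classes $O,E$ as generators and produce infinitely many distinct terms that peel off the levels. The execution has a genuine gap, and it comes from missing the key consequence of transitivity.

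You worry that a point of $T_1$ might see $T_4$. Under transitivity this cannot happen. By (i), $T_4\subseteq\Diamond T_3\subseteq\Diamond\Diamond T_2\subseteq\Diamond T_2$, so such a point would lie in $\Diamond T_2$, contradicting (ii). In general, $T_j\subseteq\Diamond T_i$ and $T_i\cap\Diamond T_j=\emptyset$ hold for \emph{all} $i<j$. Hence $O\cap\Box(X-E)=T_1$ exactly, and no modification of the tower is needed. (Without transitivity the proposition is in fact false. On $\omega$ with $iRj\iff j\neq i+1$, the singletons form a tower. Yet $\Diamond U=\omega$ whenever $|U|\geq 2$ and $\Diamond\{k\}=\omega-\{k-1\}$, so new singletons only appear with smaller indices and every finitely generated subalgebra is finite. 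So no ``cleanup'' could handle the situation you were guarding against.)

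The recursion as written is degenerate. $D_n$ lies in the parity class $C_n$ of $T_n$, so $\Diamond D_n\subseteq\Diamond C_n$, which is disjoint from $\Box(X-C_n)$. Hence $D_{n+1}=\emptyset$ for all $n\geq 1$, and your induction fails at $n=2$.

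The proposed cleanup then rests on a false claim. Condition (i) does not give an infinite sequence $x_1,x_2,\dots$ with $x_n\in T_n$ and $x_{n+1}Rx_n$. It only lets you step from a given point of $T_{n+1}$ to some point of $T_n$. That yields, for each $N$, a finite chain from $T_N$ up to $T_1$, but nothing infinite. In $D$ with its tower $\{D_k\}$ no such infinite sequence exists, since each component is a finite chain. So the singleton sub-tower and the pigeonhole fallback both collapse. The fallback's ``strictly decreasing successor sets'' is also unargued, since blocks of $Q$ are only constrained to lie inside $O$ or $E$, and a repeated block along a chain does not by itself contradict (ii).

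The fix is to peel levels off the original tower using the all-pairs facts. For example, with $C_m$ the parity class of $m$,
$D_{n+1}=\bigl(C_{n+1}-\bigcup_{j\le n}D_j\bigr)-\Diamond\bigl(C_n-\bigcup_{j\le n}D_j\bigr)$
equals $T_{n+1}$ by induction. The paper does the equivalent with $\mathcal{O}_{k+1}=\Diamond\mathcal{E}_k\cap\mathcal{O}_k$ and $\mathcal{E}_{k+1}=\Diamond\mathcal{O}_{k+1}\cap\mathcal{E}_k$.

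Finally, the detour through tuned refinements is unnecessary. Infinitely many distinct terms in $O,E$ already show directly that the subalgebra they generate is infinite.
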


\begin{proof}
    Let $\{T_1,T_2,\dots\}$ be an infinite descending $\Diamond$-tower in a modal algebra $(V,\Diamond)$, and let
    \[
    \mathcal{O}_1=\bigcup_{j \text{ odd}}T_j, \ \ \
    \mathcal{E}_1=\bigcup_{j \text{ even}}T_j.
    \]
    By condition (ii) in the definition of $\Diamond$-towers, we see that
    \[
    \Diamond \mathcal{E}_1\cap \mathcal{O}_1= \bigcup_{j\geq 3, \ j \text{ odd}}T_j.
    \]
    Thus the top layer $T_1$ is removed from $\mathcal{O}_1$. We continue in this manner: recursively define
    \[
    \mathcal{O}_{k+1}=\Diamond \mathcal{E}_k \cap \mathcal{O}_k, \ \ \
    \mathcal{E}_{k+1}=\Diamond \mathcal{O}_{k+1} \cap \mathcal{E}_k.
    \]
    By a straightforward induction, we see that
    \[
    \mathcal{O}_k = \bigcup_{j\geq 2k-1, \ j \text{ odd}}T_j, \ \ \ 
    \mathcal{E}_k = \bigcup_{j\geq 2k, \ j \text{ even}}T_j
    \]
    which are all distinct elements of the modal subalgebra generated by $\{\mathcal{O}_1,\mathcal{E}_1\}$.
    Hence this subalgebra is infinite, and $(V,\Diamond)$ is not locally finite.
\end{proof}

Combining this with theorem \ref{thm:no_tower_implies_locally_finite} gives us the following.

\begin{corollary} \label{coro:classification_of_lfcmk4a}
    Let $(V,\Diamond)$ be a complex modal $K4$ algebra. Then $(V,\Diamond)$ is locally finite if and only if it does not contain an infinite descending $\Diamond$-tower.
\end{corollary}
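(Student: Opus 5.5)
The corollary is the conjunction of two results already proved, so I will argue each direction separately.

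For the \emph{if} direction I will invoke Theorem~\ref{thm:no_tower_implies_locally_finite}. Let $(V,\Diamond)$ be a complex modal $K4$ algebra with no infinite descending $\Diamond$-tower. By that theorem it is locally finite. Nothing about complexness is needed here; the theorem holds for any $K4$ modal algebra.

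For the \emph{only if} direction I will use the contrapositive of Proposition~\ref{prop:complete_implies_tower_implies_not_locally_finite}. Suppose $(V,\Diamond)=A(F)$ for a transitive Kripke frame $F=(X,R)$, and suppose $(V,\Diamond)$ contains an infinite descending $\Diamond$-tower $T_1,T_2,\dots$. The proposition says $A(F)$ is not locally finite. So a locally finite complex $K4$ algebra contains no such tower.

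The one point to check is that the proof of Proposition~\ref{prop:complete_implies_tower_implies_not_locally_finite} actually works in the algebra at hand. That proof forms the infinite unions $\mathcal{O}_1=\bigcup_{j \text{ odd}}T_j$ and $\mathcal{E}_1=\bigcup_{j \text{ even}}T_j$. These lie in $V$ exactly because $V=\mathcal{P}(X)$ is complete, which is where complexness is used; the Remark shows the converse fails without it.

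I would also confirm the formula $\mathcal{O}_k=\bigcup_{j\ge 2k-1,\ j\text{ odd}}T_j$. It needs two facts:
\begin{itemize}
\item[(a)] Each $T_j$ with $j\geq 2$ lies in $\Diamond$ of the later, opposite-parity layers. Condition (i) gives $T_{j}\subseteq \Diamond T_{j-1}$, and transitivity ($K4$) then makes every deeper layer lie below every higher one.
\item[(b)] The top surviving layer is excluded from $\Diamond$ of everything beneath it. This uses condition (ii) extended by transitivity to $T_i\cap\Diamond T_j=\emptyset$ for all $j>i$.
\end{itemize}

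Of these, (b) is the main obstacle. The inductive step needs $T_{2k-1}\cap \Diamond\mathcal{E}_k=\emptyset$, and that requires $T_i\cap\Diamond T_j=\emptyset$ for every $j>i$, not only $j=i+1$. Condition (ii) gives only the adjacent case directly, and transitivity alone does not obviously propagate a disjointness condition downward. If this step resists, I would first try to extract a subsequence of the tower with the strong separation property. Failing that, I would modify the construction to use $\mathcal{E}_k$ consisting only of the layers immediately below, relying on $\Diamond$ being computed in $\mathcal{P}(X)$. Either way, the resulting infinitely many distinct $\mathcal{O}_k$ lie in the subalgebra generated by two elements, so local finiteness fails, completing the equivalence.
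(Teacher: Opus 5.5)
Your decomposition is the paper's own: the Corollary is Theorem~\ref{thm:no_tower_implies_locally_finite} (no completeness needed) combined with Proposition~\ref{prop:complete_implies_tower_implies_not_locally_finite}. Complexness enters there because $\mathcal{O}_1,\mathcal{E}_1$ must lie in $V$ and $\Diamond=R^{-1}[\cdot]$ must distribute over the infinite unions.

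The one defect is that you leave (b) open, call it the main obstacle, and say transitivity does not obviously propagate the disjointness. It does, in one line:
\begin{itemize}
\item[$\cdot$] By (i), $T_{i+2}\subseteq\Diamond T_{i+1}$.
\item[$\cdot$] If $T_j\subseteq\Diamond T_{i+1}$, then $T_{j+1}\subseteq\Diamond T_j\subseteq\Diamond\Diamond T_{i+1}\subseteq\Diamond T_{i+1}$.
\item[$\cdot$] Hence $T_j\subseteq\Diamond T_{i+1}$ for all $j\geq i+2$, so $\Diamond T_j\subseteq\Diamond\Diamond T_{i+1}\subseteq\Diamond T_{i+1}$.
\end{itemize}
Therefore
\[
T_i\cap\Diamond T_j\subseteq T_i\cap\Diamond T_{i+1}=\emptyset \qquad (j\geq i+2),
\]
and the case $j=i+1$ is (ii) itself. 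Every deeper layer already lies in $\Diamond T_{i+1}$, so seeing it forces seeing $T_{i+1}$.

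Your caution is still well placed, because (b) does not follow from (i) and (ii) alone. Take $\{1,2,\dots\}$ with $iRj$ iff $j\neq i+1$. The singletons form a tower, yet $1R3$ and $\Diamond\mathcal{E}_1=\Diamond\mathcal{O}_1=X$, so the construction collapses. So the step ``by condition (ii)'' in the Proposition's proof silently uses transitivity, which the $K4$ hypothesis supplies here.

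With (b) established, the formulas for $\mathcal{O}_k,\mathcal{E}_k$ follow by routine induction, also using that the $T_j$ are pairwise disjoint. Neither of your fallback constructions is needed.

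One small slip: in (a) the relevant layer is the \emph{preceding} one, not a later one. That is, $T_j\subseteq\Diamond T_{j-1}$ with $j-1\geq 2k$ (resp.\ $\geq 2k+1$) of opposite parity, so $T_{j-1}\subseteq\mathcal{E}_k$ (resp.\ $\mathcal{O}_{k+1}$).
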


With this classification of locally finite complex modal $K4$ algebras, it is reasonable to ask how it translates to a classification of transitive Kripke frames whose dual modal algebras are locally finite. We show that local finiteness of these algebras is equivalent to a comparatively simple pair of properties holding in their respective frames.
\par
Recall that the \textit{disjoint union} of a family of frames $(F_i)_{i \in I}$ where $F_i=(X_i,R_i)$ is the frame 
    $\bigsqcup_{i\in I}F_i = \left( \bigsqcup_{i \in I}X_i\ ,R_I \right)$,
    such that 
    $\bigsqcup_{i \in I}X_i = \bigcup_{i \in I} (\{i\} \times X_i)$, and
    \[
    (i,x) R_I (j,y)\ \ \ \ \text{ iff } \ \ \ \ (i=j \ \text{ and }\ xR_i y).
    \]
Let $D$ denote the disjoint union of all finite ordinals viewed as reflexive linear orders under their usual order $\leq$ ; that is, 
$D=\bigsqcup_{\alpha \in \omega} (\alpha, \leq_\alpha)$.
\par
\begin{definition}
    A transitive frame $(X,R)$ is said to be \textit{$D$-free} if $D$ is not a substructure of $(X,R\cup \Delta_X)$.
\end{definition}

The notion of $D$ will be considerably important in the following section, so here we provide some intuition for its choosing via the proposition below.
A transitive frame has \textit{finite height} if there is a finite bound on the length of chains in the frame, and has infinite height otherwise.

\begin{proposition}\label{prop:fin_height}
    Let $(X,R)$ be a transitive frame. $(X,R)$ has finite height if and only if $R$ is well-founded, Noetherian (converse well-founded), and $D$-free.
\end{proposition}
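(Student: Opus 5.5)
Throughout I read ``chains'' and (converse) well-foundedness with respect to the strict part of the transitive relation, i.e.\ sequences $x_1,x_2,\dots$ with $x_iRx_{i+1}$ and not $x_{i+1}Rx_i$. This makes clusters harmless, and it is the reading under which the statement is nontrivial. I prove the two directions separately; the forward direction is routine and the converse carries the weight.

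For the forward direction, assume every strict chain has length at most $n$. An infinite strict descending or ascending sequence would contain strict chains of every length, so $R$ is well-founded and Noetherian. Suppose $D$ were a substructure of $(X,R\cup\Delta_X)$. Take the component $(n+1,\leq)$ of $D$, embedded as $a_0,\dots,a_n$. Because it is a substructure, $a_iRa_j$ holds for $i<j$ and $a_jRa_i$ fails for $i<j$, since $j\not\leq i$. Hence $a_0,\dots,a_n$ is a strict chain of length $n+1$, a contradiction.

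For the converse, assume $R$ is well-founded and Noetherian and has infinite height; the goal is to embed $D$. First, for each $k$ fix a strict chain $C_k$ of length $k$. It suffices to find infinitely many of these chains, possibly after passing to subchains of prescribed lengths, that are pairwise disjoint and have no $R$-relations between points of distinct chains. Then reindexing gives, for every finite ordinal $\alpha$, a copy of $(\alpha,\leq_\alpha)$, and no cross relations holds between the copies, which is exactly a substructure isomorphic to $D$. (Within one chain, transitivity and strictness already give the exact order $\leq_\alpha$ under $R\cup\Delta_X$.)

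To get this independence I would use the infinite Ramsey theorem. The plan is as follows. From the long chains, thin out and use the pigeonhole principle to obtain a sequence of points $c_0,c_1,\dots$, taking $c_k$ from $C_{m_k}$, such that above or below $c_k$ there remain strict chains of length $\geq k$. To set this up, I would use ranks. The Noetherian rank $u(x)$ (the supremum of the lengths of strict chains starting upward from $x$) and the dual rank $d(x)$ from well-foundedness are both ordinals, and infinite height means $\sup_x(u(x)+d(x))\geq\omega$. If some $x$ has $u(x)\geq\omega$, choose an $R$-maximal such $x$, which exists by Noetherianity. Its strict successors $y$ then all have finite $u(y)$, with supremum at least $\omega$, and I take the $c_k$ among these successors. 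The case with some $d(x)\geq\omega$ is dual. If all of $u$ and $d$ are finite, I take the $c_k$ directly with $u(c_k)\to\infty$. Now colour each pair $\{c_i,c_j\}$ with $i<j$ by the relation between them: $c_iRc_j$ strictly, $c_jRc_i$ strictly, mutually related, or incomparable. Ramsey's theorem gives an infinite homogeneous set. A homogeneous ``strict up'' or ``strict down'' colour yields an infinite strict chain, contradicting Noetherianity or well-foundedness. A homogeneous ``same cluster'' colour contradicts the choice of the $c_k$ having distinct finite ranks. So the homogeneous colour is incomparability. Finally, I would prune the chains attached to the $c_k$ (using only the part above $c_k$, or only the part below) and apply Ramsey again to the finitely many cross relations between chains. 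The aim is to ensure that a relation between the chain of $c_i$ and the chain of $c_j$ would force, via transitivity through $c_i$ and $c_j$, either a comparability of $c_i$ and $c_j$ or an infinite strict chain.

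The main obstacle is this last step. Incomparability of the base points $c_i$ does not by itself prevent some point above $c_i$ from being related to some point above $c_j$. I expect to handle it by choosing each chain inside the cone $R(c_k)$ and using rank bookkeeping: a cross relation from the chain above $c_i$ into the chain above $c_j$ would lower the finite rank of the target in a controlled way. A diagonal Ramsey argument (colouring pairs $i<j$ by whether any cross relation occurs, in either direction) should then reduce everything to the homogeneous ``no cross relation'' case. The remaining cases would be excluded by building an infinite strict chain, contradicting the two well-foundedness assumptions.
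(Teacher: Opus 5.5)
Your forward direction is correct, and the Ramsey step does produce an infinite antichain of base points $c_k$ with unbounded finite rank. The converse, however, is not proved. The step that actually yields $D$ is only announced: making the chosen chains pairwise unrelated. The mechanism you sketch for it cannot work as stated. Incomparability of the $c_k$ gives no control over relations between the chains above them. The dichotomy you anticipate, ``no cross relation, or else an infinite strict chain'', is false. Take $X=\{a_{i,r} : 1\le r\le i\}$ with $a_{i,r}<a_{j,s}$ iff $j\le i$ and $r<s$. This is a strict partial order with the following properties:
\begin{itemize}
\item Ascending chains have strictly increasing $r$, bounded by the starting first coordinate. Descending chains have strictly decreasing $r\ge 1$. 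So the order is well-founded and Noetherian.
\item The columns give infinite height.
\item $u(a_{i,r})=i-r+1$ and $d(a_{i,r})=r$ are all finite.
\end{itemize}
Your procedure may legitimately pick $c_k=a_{i_k,1}$, which is an antichain with $u\to\infty$, and attach the columns above them. Even after discarding the base points, $a_{j,2}<a_{i,3}$ whenever $3\le i<j$. So the homogeneous colour is ``cross-related'', yet there is no infinite chain. $D$ does embed here, for example via column segments whose levels increase with the index. That choice is not produced by a Ramsey argument on base points, and ``rank bookkeeping'' is left unspecified exactly where it is needed.

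The missing idea is to block both directions of cross relations with two monotone invariants at once. If $p<q$ strictly, then $u(p)>u(q)$ and $d(p)<d(q)$, and points of one cluster share both ranks. Suppose every point of the $n$-th chain has $u$- and $d$-values exceeding all values on earlier chains. Then no earlier point lies below a new one (by $u$), and no new point lies below an earlier one (by $d$), so no Ramsey argument is needed. When both ranks are finite, let $K$ bound the ranks on earlier chains; the segment $y_{K+1},\dots,y_{K+n}$ of any chain $y_1<\dots<y_{2K+n}$ then works. Your case split can be sharpened to reach this situation:
\begin{itemize}
\item First pass to the cone above a maximal $x$ with $u(x)=\omega$. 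The cone is upward closed, so $u$ is finite there.
\item Then, inside the cone, pass to the set below a minimal $y$ with $d(y)=\omega$. That set is downward closed within the cone, so $d$ is finite there.
\end{itemize}
Infinite height survives both steps, which would give a self-contained alternative proof. The paper does the same two-sided bookkeeping differently. It shows that the top-down levels $N_k$ form an infinite descending $\Diamond$-tower and invokes Theorem~\ref{thm:classification_of_tunable_transitive_frames}. That theorem rests on a diagonal construction in which the bottom-up rank levels $M_n$ block one direction of cross relations and the tower levels $T_j$ block the other.
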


We prove this after proving theorem \ref{thm:classification_of_tunable_transitive_frames}, as it will greatly simplify the proof.
Recall that any transitive frame whose logic is locally tabular must have finite height, and thus is well-founded, Noetherian, and $D$-free. Informally, locally tabular normal extensions of $K4$ reject infinite chains and the presence of $D$.
As will be shown in the following section, local finiteness of (complex) modal $K4$ algebras is strictly more general: their dual Kripke frames accept infinite ascending chains, but still reject the presence of infinite descending chains and the presence of $D$.
As will be shown, this is due to infinite descending $\Diamond$-towers: the presence of such a tower in a complex modal $K4$ algebra is equivalent to the dual Kripke frame either not being well-founded or not being $D$-free.
\par
On a well-founded poset $(X,\leq)$, recall the rank function $\rho$ from $X$ into the ordinals:
\[
\rho(x)=\text{sup}\{\rho(y)+1\mid y < x\}.
\] 
We define $M_1$ to be the set of minimal elements of $(X,\leq)$, and inductively define $M_{n+1}$ to be the set of minimal elements of the subposet $X-\bigcup_{j=1}^nM_j$. 
The following lemma, which is straightforward to verify, connects these constructions.
\begin{lemma}\label{claim1}
    For a well-founded poset $(X,\leq)$, $M_n=\{x \in X \mid \rho(x)=n-1\}$.
\end{lemma}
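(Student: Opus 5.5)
I will prove the equality $M_n=\{x\in X\mid \rho(x)=n-1\}$ by strong induction on $n\geq 1$, using only the definition of $\rho$ and the recursive definition of the layers $M_n$. Write $X_n = X-\bigcup_{j=1}^{n-1}M_j$, so that $X_1=X$ and $M_n$ is the set of minimal elements of the subposet $(X_n,\leq)$. The inductive hypothesis will be that for all $k<n$ we have $M_k=\{x\mid\rho(x)=k-1\}$. Combined with the fact that $\rho(x)<n-1$ means $\rho(x)=k-1$ for some $k<n$, this gives
\[
X_n=\{x\in X\mid \rho(x)\geq n-1\}.
\]

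\textbf{Base case.} For $n=1$ the identity is immediate. By definition, $\rho(x)=0$ iff the supremum is taken over the empty set, i.e.\ iff there is no $y<x$. That is exactly the statement that $x$ is minimal in $X$.

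\textbf{Inductive step, $\supseteq$.} Suppose $\rho(x)=n-1$. Then $x\in X_n$ by the description of $X_n$ above. If some $y\in X_n$ had $y<x$, then $\rho(x)\geq\rho(y)+1\geq n$, which is a contradiction. So $x$ is minimal in $X_n$, i.e.\ $x\in M_n$.

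\textbf{Inductive step, $\subseteq$.} Suppose $x\in M_n$. Then $x\in X_n$, so $\rho(x)\geq n-1$. By minimality of $x$ in $X_n$, every $y<x$ lies outside $X_n$, hence in some $M_k$ with $k<n$. By the inductive hypothesis $\rho(y)=k-1\leq n-2$, so $\rho(y)+1\leq n-1$. Taking the supremum over all $y<x$ gives $\rho(x)\leq n-1$, and therefore $\rho(x)=n-1$.

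The only step requiring care is the description of $X_n$, specifically that $X_n$ consists exactly of the elements of rank at least $n-1$. This relies on the ranks below $n-1$ being the finite values $0,\dots,n-2$, which are exhausted by the earlier layers $M_1,\dots,M_{n-1}$ via the inductive hypothesis. Well-foundedness is used only to guarantee that $\rho$ is defined. Elements of infinite rank never appear in any $M_n$, which is consistent with the statement, since it concerns only finite $n$.
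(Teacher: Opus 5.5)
Your proof is correct. The key point is that the inductive hypothesis gives $X-\bigcup_{j<n}M_j=\{x\in X\mid\rho(x)\geq n-1\}$, and from this both inclusions follow. The paper gives no proof of this lemma, saying only that it is straightforward to verify, and your induction is the routine verification it leaves to the reader.
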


Equivalently, $M_n$ is the set of elements $x$ such that the longest chain with maximum $x$ has length $n$.

\begin{lemma}\label{claim2}
    Let $(X,\leq)$ be a well-founded poset whose dual algebra contains an infinite descending $\Diamond$-tower $\{T_1,T_2,\dots\}$ which partitions $X$. 
    Then for each $n \in \mathbb{Z}_{>0}$, there are infinitely many $T_j$'s such that $M_n\cap T_j$ is nonempty.
\end{lemma}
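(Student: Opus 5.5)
The plan is a direct argument with no induction on $n$. It uses two facts about the tower in the complex algebra of $(X,\leq)$, where $\Diamond A=\{x\mid \exists a\in A\ (x\leq a)\}$.

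\textbf{First fact (upward chains).} By transitivity (the remark after Definition~\ref{def:diamond_tower}), $T_j\subseteq\Diamond T_i$ whenever $j>i$. Since the $T_i$ are pairwise disjoint, every $x\in T_j$ lies strictly below some element of $T_i$.

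\textbf{Second fact (no element of an earlier layer lies below a later one).} For $i<j$ we have $\Diamond T_j\subseteq\Diamond\Diamond T_{i+1}\subseteq \Diamond T_{i+1}$, using the $4$ axiom when $j>i+1$. Condition (ii) then gives $T_i\cap\Diamond T_j=\emptyset$. So if $v\leq u$ with $u\in T_k$, $v\in T_{k'}$, and the $T$'s partition $X$, then $k'\geq k$.

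\textbf{Finding a witness in $M_n$.} Fix $n$ and an arbitrary index $j$. It suffices to find some $k\geq j$ with $M_n\cap T_k\neq\emptyset$. Put $m=j+n-1$ and pick any $x\in T_m$, which is possible since $T_m$ is nonempty.
\begin{itemize}
\item Applying the first fact repeatedly gives a chain $x=u_0<u_1<\dots<u_{n-1}$ with $u_i\in T_{m-i}$. Hence $u_{n-1}\in T_j$ and $\rho(u_{n-1})\geq n-1$.
\item I then find $v\leq u_{n-1}$ with $\rho(v)=n-1$. By Lemma~\ref{claim1}, such a $v$ lies in $M_n$.
\item By the partition hypothesis, $v\in T_k$ for some $k$. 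The second fact, applied to $v\leq u_{n-1}\in T_j$, forces $k\geq j$.
\end{itemize}
Since $j$ was arbitrary, infinitely many $T_k$ meet $M_n$.

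\textbf{Main obstacle.} The step needing genuine care is the existence of $v\leq u$ with $\rho(v)=\beta$ for any ordinal $\beta\leq\rho(u)$. This matters because $\rho(u_{n-1})$ may be an infinite ordinal. I would prove it by well-founded induction on $u$. If $\beta=\rho(u)$, take $v=u$. If $\beta<\rho(u)=\sup\{\rho(y)+1\mid y<u\}$, then some $y<u$ has $\beta<\rho(y)+1$, i.e.\ $\beta\leq\rho(y)$. The induction hypothesis at $y$ gives $v\leq y$ with $\rho(v)=\beta$, and transitivity gives $v\leq u$.

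Everything else is bookkeeping with the two facts. In particular, the hypothesis that the tower partitions $X$ is used exactly once: to place $v$ in some $T_k$.
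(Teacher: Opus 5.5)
Your proof is correct and rests on the same ingredients as the paper's. First, you climb from $T_{j+n-1}$ via condition (i) to obtain an element of $T_j$ of rank at least $n-1$. Second, you use condition (ii) with transitivity to keep everything below that element in layers of index at least $j$.

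The paper instead argues by contradiction. If $M_n\subseteq\bigcup_{k\le m}T_k$, then $Y=(X-\bigcup_{i<n}M_i)-\bigcup_{k\le m}T_k$ is nonempty but has no minimal element, since a minimal one would lie in $M_n$. This lets the paper avoid your rank-attainment lemma. Your direct version needs that lemma, and you prove it correctly by well-founded induction.
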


\begin{proof}
    By way of contradiction, suppose not. Then there is some $M_n$ which has nonempty intersection with only finitely many $T_j$'s. Then since $\{T_1,T_2,\dots\}$ partitions $X$, there is some $m$ with $M_n \subseteq \bigcup_{j=1}^m T_j$.
    Let $Y=(X-\bigcup_{i=1}^{n-1} M_i)-\bigcup_{j=1}^mT_j$, and observe that it has no minimal elements, since any minimal element of 
    $Y$ is minimal for $X-\bigcup_{i=1}^{n-1} M_i$ as well (in particular, by condition (ii) in the definition of $\Diamond$-towers). 
    However, notice that for each $k\in \mathbb{Z}_{>0}$, there is an element $x\in T_{k}$ which is the maximum of a chain of length $n$.
    This $x$ can be found by picking an arbitrary $y \in T_{k+n-1}$ and using condition (i) in the definition of $\Diamond$-towers to build a chain of length $n$ with least element $y$ and greatest element $x$.
    Thus $\rho(x)\geq n-1$, implying by lemma \ref{claim1} that $x \notin\bigcup_{i=1}^{n-1}M_i$. 
    But then $Y \cap T_k$ is nonempty for each $k\geq m+1$, and so $Y$ is an (infinite) subset of $X$ with no minimal elements, contradicting that $X$ is well-founded.
\end{proof}

\begin{lemma}\label{lemma:tower&wf_implies_Dfree}
    Let $F=(X,\leq)$ be a poset. Then if $A(F)$ admits an infinite descending $\Diamond$-tower, $F$ is either not well-founded or not $D$-free.
\end{lemma}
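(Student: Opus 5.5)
We argue by contraposition: assume $F=(X,\leq)$ is well-founded and $A(F)$ admits an infinite descending $\Diamond$-tower $\{T_1,T_2,\dots\}$. We will exhibit $D$ as a substructure of $(X,\leq)$, which is reflexive, so it suffices to find chains $C_1,C_2,\dots$ with $|C_n|=n$ and no two elements from distinct chains comparable.

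\textbf{Step 1: reduce to a partitioning tower.} Let $Y=\bigcup_j T_j$ with the induced order. The witnesses demanded by condition (i) lie in $T_i\subseteq Y$, and condition (ii) only becomes weaker on restriction. Hence $\{T_j\}$ is an infinite descending $\Diamond$-tower in $A(Y,\leq\restriction Y)$ that partitions $Y$, and $Y$ is well-founded. Below, $\rho$ and $M_n$ refer to $Y$.

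\textbf{Step 2: the layer structure.} In a poset, $x\in\Diamond A$ means $x\leq a$ for some $a\in A$. Transitivity and condition (ii) give, as noted after Definition \ref{def:diamond_tower}, that no element of $T_i$ lies below any element of $T_k$ when $k>i$. So if $y\leq x$ with $x\in T_j$, then $y\in T_k$ for some $k\geq j$. By Lemma \ref{claim1}, every element of $M_m$ has finite rank $m-1$. If $\rho(x)=m>0$, then some $y<x$ has $\rho(y)=m-1$, because a supremum of finitely many naturals attaining $m$ is attained. Thus each $x\in M_{m+1}$ is the top of a chain $x_0<x_1<\dots<x_m=x$ with $\rho(x_i)=i$. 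All $x_i$ lie in layers of index at least that of $x$.

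\textbf{Step 3: inductive choice of chains.} Suppose $C_1,\dots,C_{n-1}$ have been chosen. Let $J$ exceed every layer index met by these chains, and let $s$ exceed every rank occurring in them. By Lemma \ref{claim2}, applied to $M_{s+n}$, there is $j>J$ and $x\in M_{s+n}\cap T_j$. Take the chain below $x$ from Step 2 and let $C_n$ be its top $n$ elements, which have ranks $s,\dots,s+n-1$.

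\textbf{Step 4: incomparability.} Let $y\in C_n$ and $z\in C_i$ with $i<n$. First, $z\leq y$ is impossible: $z$ lies in a layer of index less than $J$, while $y$ lies in a layer of index at least $j>J$, and Step 2 forbids this. Second, $y\leq z$ with $y\neq z$ would force $\rho(y)<\rho(z)$, but $\rho(y)\geq s>\rho(z)$. Hence $\bigcup_n C_n$ with the induced order is isomorphic to $D$, so $F$ is not $D$-free.

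The main obstacle is Step 3. Lemma \ref{claim2} alone only yields chains lying deep in the tower. Layers rule out comparabilities only in one direction, so new elements could still sit below old ones. The rank-offset trick (taking $s$ larger than all previously used ranks) is what closes this gap, and it needs the finiteness of ranks on the sets $M_n$ from Lemma \ref{claim1}.
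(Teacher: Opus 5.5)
Your proof is correct and follows essentially the same route as the paper. Like the paper, you restrict to $\bigcup_j T_j$, use Lemma~\ref{claim2} to find a point of a deep rank level $M_{s+n}$ inside a deep tower layer, take the top $n$ elements of a chain below it with consecutive ranks, and rule out comparabilities with earlier chains by tower layers in one direction and by rank in the other. Your adaptive choice of $J$ and $s$ replaces the paper's fixed sequence $m_{n+1}>m_n+n$ and indices $k_n$. Your explicit consecutive-rank chain and your layer property (which also needs condition (i), via the remark after Definition~\ref{def:diamond_tower}, not just (ii) as you state) spell out what the paper uses implicitly.
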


\begin{proof}
    Assume $F$ is well-founded, and 
    let $\{T_1,T_2,\dots\}$ denote the infinite descending $\Diamond$-tower contained in $A(F)$. 
    By restricting to $\bigcup_{i}T_i$, without loss of generality, we can assume $\{T_1,T_2,\dots\}$ partitions $X$.
    \par
    We now construct $D$ as a substructure of $X$ via an infinite-step process. 
    The idea of the construction is the following. At step $n$, we select a point which lies in some $M_i$ and $T_j$ for sufficiently large $i,j$,
    so that it lies outside all previously chosen structures. 
    Using Claim 1, we then extract a chain of length $n$ whose top segment will serve as the copy of $(n,\leq)$. 
    Finally, we remove some collection of $M_i$'s and $T_j$'s so that the chain obtained in the next step neither witnesses nor is witnessed by any of the chains obtained in previous steps.
    \par
    Let $(m_n)_{n \in \mathbb{Z}_{>0}}$ be any strictly increasing sequence of natural numbers such that $m_{n+1}> m_n+n$.
    The process is defined as follows:
    \par \smallskip
    \textbf{Step 1}: Pick any $x\in M_{m_1}$, and say $x \in T_{k_1}$. Let
    \[
    F_1=X-(\bigcup_{i=1}^{m_1} M_i \cup \bigcup_{j=1}^{k_1} T_j).
    \]
    Further let $Z_{1}=\{x\}$, and note that $Z_1\subseteq \bigcup_{i=1}^{m_1}M_i \cap \bigcup_{j=1}^{k_1}T_j$ and is clearly isomorphic to $(1,\leq)$.
    \par \smallskip
    \textbf{Step n+1}: Assume inductively that after step n we have constructed $Z_n$ and $F_n$ such that 
    \[
    F_n=X-(\bigcup_{i=1}^{m_n} M_i \cup \bigcup_{j=1}^{k_n} T_j), \ \ Z_n \subseteq \bigcup_{i=1}^{m_n} M_i \cap \bigcup_{j=1}^{k_n}T_j
    \] 
    where $Z_n \cong \bigsqcup_{i\leq n}(i,\leq_i)$.
    Since only finitely many $T_j$'s were removed in the definition of $F_n$ and each $M_j$ has nonempty intersection with infinitely many $T_i$'s (lemma \ref{claim2}), it follows that $M_j \cap F_n$ is nonempty for all $j>m_n$.
    Hence, choose some $z_{n+1} \in F_n \cap M_{m_{n+1}}$. 
    Then by lemma \ref{claim1}, $z_{n+1}$ is the maximum of a chain of length $m_{n+1}$. Let
    \[
    z_{n+1} \geq z_{n}\geq \dots \geq z_{2} \geq z_{1}
    \]
    be the top $n+1$ elements of this chain, which we call $C$. 
    Notice that since $\rho(z_{n+1})=m_{n+1}-1$, we have $\rho(z_n)=m_{n+1}-2$, and $\rho(z_{n-1})=m_{n+1}-3$, and so on.
    Then for each $z_j\in C$, lemma \ref{claim1} shows that
    \[
    z_{j} \in M_{m_{n+1}+j-(n+1)}
    \]
    and hence, since $m_{n+1}>m_n + n$,
    \[
    m_{n+1}+j-(n+1)> m_n +n+j-(n+1)=m_n+j-1\geq m_n
    \]
    implying that $z_{j} \in X-\bigcup_{i=1}^{m_n}M_i$.
    Now, since $Z_n \subseteq \bigcup_{i=1}^{m_n}M_i$, it follows that 
    each member of $Z_n$ is minimal for a set containing $X-\bigcup_{i=1}^{m_n}M_i$, and hence $z_j$ does not witness any member of $Z_n$.
    \par
    Moreover, $z_{j} \leq z_{n+1}\in X-\bigcup_{i=1}^{k_n}T_i$, and 
    since $Z_n\subseteq\bigcup_{i=1}^{k_n}T_i$,
    it follows from (ii) in the definition of $\Diamond$-towers that no member of $Z_n$ will witness any $z_{j}$ either.
    Thus $Z_{n+1}=Z_n \cup C$ is isomorphic (as relational structures) to $\bigsqcup_{i\leq n+1}(i,\leq_i)$.
    \par
    To finish, suppose that $z_{1} \in T_{k_{n+1}}$, and noting that $Z_{n+1}\subseteq \bigcup_{i=1}^{m_{n+1}}M_i \cap \bigcup_{j=1}^{k_{n+1}}T_j$, let
    \[
    F_{n+1}=X-(\bigcup_{i=1}^{m_{n+1}} M_i \cup \bigcup_{j=1}^{k_{n+1}} T_j).
    \]
    This concludes step n+1.
    \par
    \smallskip
    As this process will not end, we see that $Z=\bigcup_{i \in \mathbb{Z}_{>0}} Z_i$ is a substructure of $F$ isomorphic to $D$, as desired.

\end{proof}

\begin{lemma} \label{lemma:D&idc_not_tunable}
    The complex modal algebras of both $D$ and $(\omega,\leq^{-1})$, the natural numbers with the reverse ordering, contain an infinite descending $\Diamond$-tower.
\end{lemma}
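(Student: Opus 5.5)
For both frames the plan is to write down an explicit infinite descending $\Diamond$-tower. In a complex algebra every subset of the carrier is an element, so membership in $V$ is automatic. It then remains to check nonemptiness, pairwise disjointness, and conditions (i) and (ii) of Definition \ref{def:diamond_tower}. In each case we first compute $\Diamond$ concretely from $\Diamond_R U=R^{-1}[U]$.

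For $(\omega,\leq^{-1})$, $xRy$ means $y\leq x$. Hence $\Diamond U=\{x\mid \exists y\in U\ (y\leq x)\}$ is the upward closure of $U$ in the usual order. Following the remark after Theorem \ref{thm:no_tower_implies_locally_finite}, we take singletons: $T_i=\{i-1\}$ for $i\in\mathbb{Z}_{>0}$. These are clearly nonempty and pairwise disjoint. For (i), $T_{i+1}=\{i\}$ lies in the upward closure of $\{i-1\}$. For (ii), $\Diamond T_{i+1}=\{x\mid x\geq i\}$ does not contain $i-1$. Reflexivity of $\leq$ causes no trouble, because disjointness of the $T_i$ already keeps each $T_i$ from meeting $\Diamond T_{i+1}$.

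For $D=\bigsqcup_{n\in\omega}(n,\leq_n)$, write points as $(n,k)$ with $0\leq k<n$. Then $\Diamond U$ is the downward closure of $U$ taken inside each component. The idea is to layer each component by its distance from the top. Let
\[
T_i=\{(n,k)\mid n\geq i,\ k=n-i\},
\]
the set of points lying exactly $i-1$ steps below the top of their component. Each $T_i$ is nonempty since $(i,0)\in T_i$, and the $T_i$ are pairwise disjoint because the index $n-k$ determines $i$.

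For condition (i), a point $(n,n-i-1)\in T_{i+1}$ lies below $(n,n-i)\in T_i$ in the same component, so it belongs to $\Diamond T_i$. For condition (ii), suppose $(n,n-i)\in T_i$ were in $\Diamond T_{i+1}$. Then it would lie below some $(m,m-i-1)$ in the same component. That forces $m=n$ and $n-i\leq n-i-1$, which is impossible.

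There is no genuine obstacle in either case. The only points needing care are the direction of $\Diamond$ (preimage under $R$, so upward closure for the reversed order and downward closure for $D$) and the indexing of the layers in $D$ so that condition (i) holds within a single component.
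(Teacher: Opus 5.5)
Your proof is correct and follows the paper's argument: you use the singletons $\{i-1\}$ for $(\omega,\leq^{-1})$, and for $D$ the layers of points lying a fixed number of steps below the top of their component (the paper's $D_k=\{(i,\,i-(k-1))\mid i\geq k\}$), and you also carry out the routine checks of (i) and (ii) that the paper leaves to the reader. Your indexing, $(n,k)$ with $0\leq k<n$ and $T_i=\{(n,n-i)\mid n\geq i\}$, is actually the more faithful one for $D$ as a disjoint union of finite ordinals, since the paper's coordinates $(i,j)$ with $j\leq i$ are off by one relative to that definition; this does not affect the argument.
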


\begin{proof}
For $(\omega,\leq^{-1})$, the singletons of $\mathcal{P}(\omega)$ precisely form the tower.
For $D$, recall that its elements are of the form $(i,j)$, with $j \leq i$, ordered so that $(i,x)\leq (j,y)$ iff $i=j$ and $x\leq y$. Define
\[
D_k=\{(i,\ i-(k-1))\mid i\geq k\}.
\]
Thus $D_k$ consists exactly of those elements lying $k-1$ steps below the top of their respective chain.
It is not difficult to check that $\{D_k\mid k \geq 1\}$ forms an infinite descending $\Diamond$-tower.
\end{proof}

Recall that a quasi-order (preorder) is a relation that is reflexive and transitive.
A \textit{cluster} is a frame of the form $(X,X\times X)$ for a set $X$.
The \textit{skeleton} of a quasi-order $F=(X,R)$ is the partial order sk$F=(\overline{X},\leq_R)$,
where $\overline{X}$ is the resulting quotient of $X$ by the equivalence $R \cap R^{-1}$, and ${A}\leq_R{B}$ iff there is some $a \in A$ and $b\in B$ with $aRb$.

\begin{lemma} \label{lemma:quasi-ordeing_classification}
    Let $F=(X,R)$ be a quasi-ordering. Then $F$ is well-founded and $D$-free if and only if $A(F)$ admits no infinite descending $\Diamond$-towers. 
\end{lemma}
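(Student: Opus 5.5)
We prove both directions for the quasi-order $F=(X,R)$ by passing to the skeleton $\mathrm{sk}F=(\overline{X},\leq_R)$ and reusing Lemmas \ref{lemma:tower&wf_implies_Dfree} and \ref{lemma:D&idc_not_tunable}.

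\textbf{($\Leftarrow$).} Suppose $A(F)$ admits an infinite descending $\Diamond$-tower $\{T_1,T_2,\dots\}$. We first show that each $T_i$ is a union of clusters. Suppose $x\in T_i$ and $y$ lies in the cluster of $x$ with $y\in T_j$, $j\neq i$.
\begin{itemize}
\item If $j>i$, then $xRy$ gives $x\in\Diamond T_j\subseteq\Diamond T_{i+1}$ by transitivity (since $T_j\subseteq\Diamond T_{i+1}$ for $j>i+1$), contradicting condition (ii).
\item If $j<i$, then $yRx$ gives $y\in T_j\cap\Diamond T_i$, which contradicts condition (ii) in the same way.
\end{itemize}
Hence the images $\overline{T_i}$ in $\overline{X}$ are pairwise disjoint. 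Conditions (i) and (ii) transfer directly, because $\Diamond_R$ of a union of clusters is the preimage of $\Diamond_{\leq_R}$ of its image. So $A(\mathrm{sk}F)$ admits a tower.

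By Lemma \ref{lemma:tower&wf_implies_Dfree}, $\mathrm{sk}F$ is then either not well-founded or not $D$-free. It remains to lift this to $F$.
\begin{itemize}
\item An infinite strictly descending chain of clusters yields an infinite descending chain in $F$, with strict steps $\neg(yRx)$. I read ``well-founded'' for a quasi-order as the absence of such strictly descending sequences.
\item A copy of $D$ in $\mathrm{sk}F$ lifts to a copy of $D$ in $(X,R)$ by choosing one representative from each cluster. Since $R$ is reflexive, $R\cup\Delta_X=R$, and the induced relation on the representatives matches $\leq_R$ exactly.
\end{itemize}

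\textbf{($\Rightarrow$).} We prove the contrapositive: if $F$ is not well-founded or not $D$-free, we exhibit a tower.

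If $F$ is not $D$-free, fix an embedding $e$ of $D$ and let $T_k=e[D_k]$, with $D_k$ as in Lemma \ref{lemma:D&idc_not_tunable}. Condition (i) holds immediately. Condition (ii) holds because $e$ is an embedding of relational structures, so distinct chains of $e[D]$ are $R$-incomparable, and within a chain an element of $D_k$ sees only elements of $D_m$ with $m\geq k$. Here we again use $R\cup\Delta_X=R$. Crucially, the $T_k$ are arbitrary subsets, which is fine because we are in the complex algebra $A(F)$.

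If $F$ is not well-founded, take a strictly descending sequence $x_1,x_2,\dots$, where $x_{n+1}Rx_n$ and $\neg(x_nRx_{n+1})$, and set $T_n=\{x_n\}$. By transitivity these points are pairwise distinct. Condition (i) is $x_{n+1}\in\Diamond\{x_n\}$ read in the paper's convention: the tower descends from $T_i$ to $T_{i+1}$ with $T_{i+1}\subseteq\Diamond T_i$, so $x_{n+1}$ sees $x_n$. Condition (ii) is $\neg(x_nRx_{n+1})$.

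\textbf{Orientation and the main obstacle.} The one thing to check carefully is the direction of ``well-founded'' relative to the tower. In the singleton example on $(\omega,\leq^{-1})$, $T_{i+1}$ sees $T_i$, and the points that see others are the lower ones. So the sequence $x_1,x_2,\dots$ above must be oriented so that this matches the paper's convention that $\Diamond$-towers witness failure of well-foundedness. I would verify this against Lemma \ref{lemma:D&idc_not_tunable} and flip $R$ if needed.

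The main obstacle is the first step of ($\Leftarrow$): showing that tower elements are unions of clusters, so that the poset Lemma \ref{lemma:tower&wf_implies_Dfree} applies. That argument relies on transitivity in exactly the way checked above.
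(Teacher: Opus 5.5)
Your proof is correct and follows the paper's route. For one direction you push the tower to $\mathrm{sk}F$ (using that no cluster meets two distinct $T_i$), apply Lemma~\ref{lemma:tower&wf_implies_Dfree}, and lift back; for the other you transport the towers of Lemma~\ref{lemma:D&idc_not_tunable} along an embedding, filling in details the paper calls easy or immediate. Your orientation $x_{n+1}Rx_n$ is already the right one, so no flipping is needed.

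There are three slips to fix. First, your direction labels are swapped: the part marked $(\Leftarrow)$ proves $(\Rightarrow)$, and vice versa. Second, in the $D$ case an element of $D_k$ sees only elements of $D_m$ with $m\le k$, not $m\ge k$, and it is $m\le k$ that yields condition (ii). Third, you have shown only that no cluster meets two $T_i$'s, not that each $T_i$ is a union of clusters; this is enough, because $\Diamond_R S$ depends only on the clusters that $S$ meets.
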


\begin{proof}
    For the forward direction, suppose contrapositively that $A(F)$ admits an infinite descending $\Diamond$-tower $\{T_1,T_2,\dots\}$. Notice that if $i\neq j$, then we cannot have $T_i\cap A\neq \emptyset$ and $T_j \cap A\neq \emptyset$ for any $A\in \overline{X}$, as that would violate (ii) in the definition of $\Diamond$-towers. 
    Hence, for $T_j'=\{A \in \overline{X} \mid T_j \cap A \neq \emptyset\}$, the set $\{T_1',T_2',\dots\}$ is an infinite descending $\Diamond$-tower in $A(\text{sk}(F))$.
    But sk$(F)$ is a partial ordering, and hence is either not well-founded or not $D$-free by lemma \ref{lemma:tower&wf_implies_Dfree}.
    It is easy to verify that $F$ is consequently either not well-founded or not $D$-free.
    The converse direction follows immediately from lemma \ref{lemma:D&idc_not_tunable}.
\end{proof}

\begin{lemma} \label{lemma:reflexivity_doesn't_matter}
    Let $(X,R)$ be a frame. Then $A(X,R)$ contains an infinite descending $\Diamond$-tower if and only if $A(X,R\cup \Delta_X)$ contains an infinite descending $\Diamond$-tower.
\end{lemma}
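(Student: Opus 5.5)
The plan is to verify the two tower conditions of Definition \ref{def:diamond_tower} directly, using the identity
\[
\Diamond_{R\cup\Delta_X} U = U \cup \Diamond_R U \qquad (U\subseteq X),
\]
which holds because $x(R\cup\Delta_X)y$ iff $xRy$ or $x=y$. Both complex algebras share the carrier $\mathcal{P}(X)$. So the same sequence of pairwise disjoint nonempty sets $T_1,T_2,\dots$ is a candidate tower in both, and I will show that it is a tower in one algebra iff it is a tower in the other. Transitivity plays no role, so the argument works for an arbitrary frame.

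For the forward direction, suppose $\{T_i\}$ is a tower in $A(X,R)$.
\begin{itemize}
\item Condition (i): $T_{i+1}\subseteq \Diamond_R T_i \subseteq T_i\cup\Diamond_R T_i = \Diamond_{R\cup\Delta_X}T_i$.
\item Condition (ii): $T_i\cap \Diamond_{R\cup\Delta_X}T_{i+1} = (T_i\cap T_{i+1})\cup(T_i\cap\Diamond_R T_{i+1})$. The first term is empty by pairwise disjointness, and the second is empty by (ii) for $R$.
\end{itemize}

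For the converse, suppose $\{T_i\}$ is a tower in $A(X,R\cup\Delta_X)$.
\begin{itemize}
\item Condition (i): $T_{i+1}\subseteq T_i\cup\Diamond_R T_i$. Since $T_{i+1}\cap T_i=\emptyset$, this forces $T_{i+1}\subseteq\Diamond_R T_i$.
\item Condition (ii): $T_i\cap\Diamond_R T_{i+1}\subseteq T_i\cap\Diamond_{R\cup\Delta_X}T_{i+1}=\emptyset$.
\end{itemize}

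There is no real obstacle. The one point that needs care is that pairwise disjointness, which is built into the definition of a tower, is exactly what absorbs the extra reflexive part $U$ in $\Diamond_{R\cup\Delta_X}U$. It is used once in each direction: in (ii) going forward and in (i) going backward.
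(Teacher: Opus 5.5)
Your proof is correct and takes the same approach as the paper: the same family $\{T_i\}$ is shown to be a tower in both $A(X,R)$ and $A(X,R\cup\Delta_X)$. Your identity $\Diamond_{R\cup\Delta_X}U=U\cup\Diamond_R U$, together with pairwise disjointness, supplies the verification that the paper leaves as ``easily checked.''
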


\begin{proof}
    Suppose $A(X,R)$ contains an infinite descending $\Diamond$-tower $\{T_1,T_2,\dots\}$.
    Then viewing $\{T_1,T_2,\dots\}$ as a collection of nonempty, pairwise disjoint subsets of $X$, one can easily check that the addition of reflexive loops will not change that $\{T_1,T_2,\dots\}$ is an infinite descending $\Diamond$-tower in $(X,R\cup \Delta_X)$. 
    The converse direction, involving removing some reflexive loops, is similarly easily verifiable.
\end{proof}

Hence, we obtain the following classification:

\begin{theorem} \label{thm:classification_of_tunable_transitive_frames}
    Let $F$ be a transitive frame. Then $A(F)$ is locally finite if and only if $F$ is well-founded and $D$-free.
\end{theorem}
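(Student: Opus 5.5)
The plan is to chain the earlier results through the reflexive closure. Let $F=(X,R)$ be transitive and put $F^r=(X,R\cup\Delta_X)$, which is a quasi-ordering. By Corollary \ref{coro:classification_of_lfcmk4a}, applied to the complex $K4$ algebra $A(F)$ (transitivity of $R$ gives the $4$ axiom), $A(F)$ is locally finite if and only if $A(F)$ contains no infinite descending $\Diamond$-tower. By Lemma \ref{lemma:reflexivity_doesn't_matter}, this holds if and only if $A(F^r)$ contains no such tower. By Lemma \ref{lemma:quasi-ordeing_classification}, applied to the quasi-ordering $F^r$, this holds if and only if $F^r$ is well-founded and $D$-free.

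It remains to translate these two conditions from $F^r$ back to $F$.

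\emph{$D$-freeness.} By definition, $F$ is $D$-free exactly when $D$ is not a substructure of $(X,R\cup\Delta_X)$. Since $(R\cup\Delta_X)\cup\Delta_X = R\cup\Delta_X$, this is literally the same condition as $F^r$ being $D$-free, so nothing needs checking.

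\emph{Well-foundedness.} The notion must be read consistently for relations that may contain loops or clusters: an infinite descending chain is a sequence $x_1,x_2,\dots$ of \emph{distinct} points with $x_{i+1}Rx_i$, or the corresponding notion in the skeleton. Under either reading, adding reflexive loops neither creates nor destroys such a chain, because the chain uses only steps between distinct points, and for $x\neq y$ we have $x(R\cup\Delta_X)y$ iff $xRy$. So $F$ is well-founded iff $F^r$ is.

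The main obstacle is precisely this bookkeeping about well-foundedness. If ``well-founded'' were read literally, as the absence of any sequence with $x_{i+1}Rx_i$, then any reflexive point would already violate it, and $F^r$ would fail trivially. Lemma \ref{lemma:quasi-ordeing_classification} implicitly works up to clusters, via the skeleton in its proof. I will therefore state explicitly the convention used there, namely chains of distinct points, or equivalently well-foundedness of the skeleton's strict order. I will check that this convention is the one under which Lemma \ref{lemma:quasi-ordeing_classification} and Lemma \ref{lemma:tower&wf_implies_Dfree} were proved, and that it is invariant under taking the reflexive closure. With that settled, the three equivalences above combine to give the theorem.
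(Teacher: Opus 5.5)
Your chain of equivalences is exactly the paper's proof. You link Corollary~\ref{coro:classification_of_lfcmk4a}, Lemma~\ref{lemma:reflexivity_doesn't_matter} and Lemma~\ref{lemma:quasi-ordeing_classification} through $(X,R\cup\Delta_X)$, and $D$-freeness transfers by definition. The paper simply asserts that $(X,R)$ is well-founded iff $(X,R\cup\Delta_X)$ is, so flagging the convention is worthwhile. However, the convention you name first is the wrong one, and it is not equivalent to the skeleton reading.

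Take the infinite cluster $(\omega,\omega\times\omega)$.
\begin{itemize}
\item The points $0,1,2,\dots$ are distinct and each $(i+1)$ sees $i$, so the cluster is not well-founded in your distinct-points sense. Its skeleton, however, is a single point.
\item It is $D$-free.
\item $A(\omega,\omega\times\omega)$ is locally finite: $\Diamond U=\omega$ for every nonempty $U$, so finitely many sets generate only the finite Boolean algebra they span.
\end{itemize}
So under the distinct-points reading both the theorem and the converse direction of Lemma~\ref{lemma:quasi-ordeing_classification} fail. That direction turns a descending chain into a tower of singletons, and condition (ii) of a tower needs $x_i$ not to see $x_{i+1}$.

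The convention that works is this: there is no infinite sequence with $x_{i+1}Rx_i$ and not $x_iRx_{i+1}$. In other words, the strict part $R\setminus R^{-1}$ is well-founded, or equivalently the skeleton of the reflexive closure is well-founded. Its invariance under adding loops is immediate: $(R\cup\Delta_X)\setminus(R\cup\Delta_X)^{-1}=R\setminus R^{-1}$, since a pair in $R\setminus R^{-1}$ is never diagonal. With that convention fixed, your argument is complete.
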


\begin{proof}
    Let $F=(X,R)$. Then $(X,R)$ is a well-founded and $D$-free transitive frame iff $(X,R\cup \Delta_X)$ is well-founded and $D$-free quasi-order, which is equivalent to $A(X,R\cup \Delta_X)$ containing no infinite descending $\Diamond$-towers by lemma \ref{lemma:quasi-ordeing_classification}.
    But by lemma \ref{lemma:reflexivity_doesn't_matter}, this is equivalent to $A(X,R)$ having no such $\Diamond$-tower, which is further equivalent to $A(X,R)$ being locally finite by corollary \ref{coro:classification_of_lfcmk4a}.
\end{proof}

If an algebra is locally finite, it is a standard observation that its logic has the finite model property.
The following corollary is an immediate consequence of this fact and the above theorem.

\begin{corollary}
    The logic of any class of well-founded transitive frames with no infinite antichains has the finite model property. 
    In particular, the logic of any class of well-quasi orderings (well-founded quasi-orderings with no infinite antichains) has the finite model property.
\end{corollary}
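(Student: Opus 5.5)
The plan is to pass from the class of frames to a single complex algebra and then use Theorem \ref{thm:classification_of_tunable_transitive_frames}. Let $\mathcal{K}$ be a class of well-founded transitive frames with no infinite antichains, and let $\varphi$ be a formula not in the logic of $\mathcal{K}$. Then $\varphi$ is refuted on some $F=(X,R)\in\mathcal{K}$. It follows that $\varphi$ fails in $A(F)$ and is not in the logic of $A(F)$, while that logic contains the logic of $\mathcal{K}$. Suppose we show that $A(F)$ is locally finite. Then the subalgebra generated by the values of the variables of $\varphi$ under a refuting valuation is finite. It refutes $\varphi$ and validates the logic of $\mathcal{K}$, because that logic holds in $A(F)$ and hence in all of its subalgebras. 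The dual of this finite algebra is a finite frame validating the logic and refuting $\varphi$. This gives the finite model property.

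So the main task is to show that a well-founded transitive frame with no infinite antichains is $D$-free. Then Theorem \ref{thm:classification_of_tunable_transitive_frames} gives local finiteness of $A(F)$. Suppose, toward a contradiction, that $D$ is a substructure of $(X,R\cup\Delta_X)$. In $D$, choose the top element $(n,n-1)$ of each component chain $(n,\leq_n)$ for $n\geq 1$. Points in distinct components are incomparable in $D$, so these infinitely many points are pairwise incomparable in $R\cup\Delta_X$. They are also pairwise distinct, since the embedding is injective. They therefore form an infinite antichain in $F$, which is impossible. This step is routine. The only care needed is to fix what ``antichain'' means for a transitive, possibly non-antisymmetric relation: a set of points no two of which are related by $R\cup\Delta_X$. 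With that convention, distinct points of the same cluster are comparable, so the argument goes through.

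For well-quasi orderings, such a relation is reflexive, transitive and well-founded, and it has no infinite antichain, so it falls under the first statement. I expect no real obstacle. The point most worth stating explicitly is the standard fact that a finite subalgebra of $A(F)$ refuting $\varphi$ dualizes to a finite frame validating the logic. This holds because finite modal algebras are complex algebras of finite frames.
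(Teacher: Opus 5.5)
Your proposal is correct and follows the paper's route: the paper obtains the corollary directly from Theorem \ref{thm:classification_of_tunable_transitive_frames} together with the standard fact that the logic of a locally finite algebra has the finite model property. You make explicit the two steps the paper leaves implicit: the tops of the components of an (induced) copy of $D$ form an infinite antichain, and refuting $\varphi$ on a single frame of the class already yields a finite countermodel for the logic of the whole class.
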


Additionally, we can now prove proposition \ref{prop:fin_height}, as promised.

\begin{proof}[Proof of Proposition \ref{prop:fin_height}.]
    The forward direction is clear, as transitive frames with finite height are certainly well-founded, Noetherian, and $D$-free.
    Conversely, suppose by contrapositivity that $(X,R)$ has infinite height, and furthermore, is Noetherian. We show that $(X,R)$ is either not well-founded or not $D$-free. 
    Define $N_1$ to be the set of maximal elements of $X$, and inductively define $N_k$, $k\geq 2$, to be the set of maximal elements of the subframe $X-\bigcup_{i=1}^{k-1}N_i$.
    Since $(X,R)$ has infinite height, each $X-\bigcup_{i=1}^{n-1}N_i$ is nonempty and has infinite height, and hence, since $(X,R)$ is Noetherian, each $N_k$ is nonempty.
    It is not difficult to check that $N_1,N_2,\dots$ forms an infinite descending $\Diamond$-tower, which implies $(X,R)$ is either not well-founded or not $D$-free by theorem \ref{thm:classification_of_tunable_transitive_frames}.
\end{proof}

Several open questions were raised by Shapirovsky in \cite{Shapirovsky_2019} regarding tunable Kripke frames, which can now be answered via theorem \ref{thm:classification_of_tunable_transitive_frames}.
One question was whether the direct product of two tunable frames is a tunable frame in general.
In fact, this is not the case.
Consider the frames $(A, \Delta_A)$ and $(\omega, \leq)$, where $A$ is countable and $(\omega, \leq)$ is the standard ordering on the natural numbers.
Both are tunable frames, as they are well-founded and $D$-free.
However, their direct product is isomorphic to the disjoint union of countably many copies of $(\omega,\leq)$, which is not $D$-free, and hence not tunable.
\par
For a finite family $(\alpha_i)_{i\in I}$ of ordinals, it was also asked whether the algebras of the direct products $\prod_{i \in I}(\alpha_i,\leq)$, $\prod_{i \in I}(\alpha_i,<)$ are locally finite.
A standard generalization of Dickson's Lemma \cite{Dickson_1913} states that the finite direct product of well-quasi orderings is a well-quasi ordering. 
Hence, by theorem \ref{thm:classification_of_tunable_transitive_frames}, we obtain a positive answer to this question:

\begin{proposition}
    For a finite family $(\alpha_i)_{i\in I}$ of ordinals, the complex algebras of the direct products $\prod_{i \in I}(\alpha_i,\leq)$ and $\prod_{i \in I}(\alpha_i,<)$ are both locally finite.
\end{proposition}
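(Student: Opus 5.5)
The plan is to apply Theorem \ref{thm:classification_of_tunable_transitive_frames} directly. Both $\prod_{i\in I}(\alpha_i,\leq)$ and $\prod_{i\in I}(\alpha_i,<)$ carry transitive relations: the first is the componentwise order, and for the second I take the product relation to be componentwise strict, i.e.\ $(x_i)R(y_i)$ iff $x_i<y_i$ for all $i$, which is again transitive. So it suffices to show that each of these frames is well-founded and $D$-free.

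\textbf{The reflexive product.} Each $(\alpha_i,\leq)$ is a well-order, hence a well-quasi-order. By the generalization of Dickson's Lemma, the finite product $\prod_{i\in I}(\alpha_i,\leq)$ is a well-quasi-order, so it is well-founded and has no infinite antichains. Absence of infinite antichains gives $D$-freeness. Suppose $D$ embedded as a substructure of $(X,R\cup\Delta_X)$. Picking the top element of each copy of $(n,\leq)$ would give infinitely many points, pairwise unrelated in both directions, since distinct components of $D$ are incomparable. That is an infinite antichain, a contradiction. Theorem \ref{thm:classification_of_tunable_transitive_frames} then yields local finiteness.

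\textbf{The strict product.} Let $S$ be the product of the strict orders, so $S\subseteq \leq_\Pi$ (the product order). Well-foundedness is immediate, since an infinite $S$-descending chain is in particular strictly descending in a well-founded first coordinate. For $D$-freeness I reduce to the previous case, using Lemma \ref{lemma:reflexivity_doesn't_matter}'s philosophy that reflexive loops are irrelevant. Suppose $D$ embeds into $(X,S\cup\Delta_X)$ via points $d_{n,k}$. In $D$, distinct components are $S\cup\Delta$-incomparable, but they might still be comparable under $\leq_\Pi$, so the antichain argument above does not transfer verbatim. This is the main obstacle.

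To handle it, I use the well-quasi-order property of $\leq_\Pi$ directly. Take the tops $t_n=d_{n,n-1}$. By the wqo property there is an infinite sequence $t_{n_1}\leq_\Pi t_{n_2}\leq_\Pi\cdots$. The aim is to derive $t_{n_a} S\, t_{n_b}$ for some $a<b$, contradicting incomparability of components in $D$. A chain of length $n$ in $S$ forces every coordinate to strictly increase $n-1$ times along it. So each bottom point $d_{n,0}$ has every coordinate at least $n-1$ below the corresponding coordinate of $t_n$.

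The first thing I would try is applying Dickson again to the pairs (bottom, top) in the product of $\prod(\alpha_i,\leq)$ with itself. This gives $a<b$ with $d_{n_a,0}\leq_\Pi d_{n_b,0}$ and $t_{n_a}\leq_\Pi t_{n_b}$, and from these comparisons I hope to extract a strict comparison in all coordinates between some point of one chain and some point of the other, again contradicting $D$.

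Should this fail for the componentwise-strict reading, the fallback is to interpret $(\alpha_i,<)$ via its reflexive closure. By Lemma \ref{lemma:reflexivity_doesn't_matter}, the question then reduces to the reflexive case already handled, provided the product relation's reflexive closure agrees with $\leq_\Pi$ (which holds if the product is taken to be $\leq_\Pi$ minus the diagonal).
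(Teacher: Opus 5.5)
Your treatment of $\prod_{i\in I}(\alpha_i,\leq)$ is exactly the paper's argument. Dickson's lemma makes the product a well-quasi-order, so it is well-founded and has no infinite antichains, hence it is $D$-free, and Theorem~\ref{thm:classification_of_tunable_transitive_frames} applies. The paper covers $\prod_{i\in I}(\alpha_i,<)$ with that same one-line citation.

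The gap is in the strict case. You read the direct product componentwise, which is correct and matches the paper's usage (see its example $(A,\Delta_A)\times(\omega,\leq)$). You also rightly notice that the antichain argument does not transfer. But you then stop at ``I hope to extract a strict comparison'', so $D$-freeness of $(X,S\cup\Delta_X)$ is never established.

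Your fallback does not fill this gap. Under the componentwise reading, $S\cup\Delta_X$ is strictly smaller than $\leq_\Pi$: in $\omega\times\omega$ you have $(0,0)\leq_\Pi(0,1)$ but not $(0,0)\,S\,(0,1)$. Moreover, $S\cup\Delta_X$ is not a well-quasi-order at all, since $\{(0,k)\mid k\in\omega\}$ is an infinite antichain for it. Lemma~\ref{lemma:reflexivity_doesn't_matter} only adds loops, so it connects $S$ to $S\cup\Delta_X$, not to $\leq_\Pi$. Switching to ``$\leq_\Pi$ minus the diagonal'' proves a different statement from the one asked.

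The missing step is short, and you already have the ingredients. The only fact needed is the mixed transitivity $\leq_\Pi\circ S\subseteq S$: if $x\leq_\Pi y\,S\,z$, then $x_i\leq y_i<z_i$ in every coordinate, so $x\,S\,z$. The argument then runs as follows.
\begin{enumerate}
\item Apply Dickson's lemma only to the bottoms $d_{n,0}$ for $n\geq 2$. It gives $n\neq m$ with $d_{n,0}\leq_\Pi d_{m,0}$.
\item Since $d_{m,0}\,S\,d_{m,1}$, mixed transitivity gives $d_{n,0}\,S\,d_{m,1}$.
\item This relates points in distinct components of $D$, contradicting that the copy of $D$ is an induced substructure of $(X,S\cup\Delta_X)$.
\end{enumerate}
The tops and the pairing inside $\prod_{i\in I}(\alpha_i,\leq)\times\prod_{i\in I}(\alpha_i,\leq)$ are unnecessary. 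Combined with your correct well-foundedness argument, Theorem~\ref{thm:classification_of_tunable_transitive_frames} then gives local finiteness of the strict product. This observation is also exactly what is needed for the paper's single Dickson citation to cover the strict case.
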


\section{Acknowledgments}

This work was supported by NSF Grant DMS - 2231414.
I am sincerely grateful to Ilya Shapirovsky for introducing me to the study of tunable frames and locally finite algebras, and for the invaluable discussions and suggestions made throughout the development of this work.

\nocite{*}
\bibliographystyle{eptcs}
\bibliography{generic}
\end{document}